\newcommand{\email}[1]{\href{mailto:#1}{\nolinkurl{#1}}}
\newlength{\mySubFigSize}
\definecolor{labelkey}{rgb}{0,0.08,0.45}
\definecolor{refkey}{rgb}{0,0.6,0.0}
\definecolor{Brown}{rgb}{0.45,0.0,0.05}
\definecolor{dgreen}{rgb}{0.00,0.49,0.00}
\definecolor{dblue}{HTML}{0455BF}
\definecolor{orng}{HTML}{D35400}
\definecolor{dred}{HTML}{D90404}
\definecolor{Dblue}{HTML}{8602DC}
\renewcommand{\leq}{\ensuremath{\leqslant}}
\renewcommand{\geq}{\ensuremath{\geqslant}}
\newcommand{\Frac}[2]{\displaystyle{\frac{#1}{#2}}} 
\newcommand{\scal}[2]{{\langle{{#1}\mid{#2}}\rangle}}
\newcommand{\abscal}[2]{\left|\left\langle{{#1}\mid{#2}}%
\right\rangle\right|} 
\newcommand{\menge}[2]{\big\{{#1}~\big |~{#2}\big\}}
\newcommand{\Argmin}{\ensuremath{{\text{\rm Argmin}}}}
\newcommand{\HH}{\ensuremath{{\mathcal H}}}
\newcommand{\HS}{\ensuremath{{\mathsf H}}}
\newcommand{\GG}{\ensuremath{{\mathcal G}}}
\newcommand{\II}{\ensuremath{{\mathbb I}}}
\newcommand{\Sum}{\ensuremath{\displaystyle\sum}}
\newcommand{\emp}{\ensuremath{{\varnothing}}}
\newcommand{\Id}{\ensuremath{\operatorname{Id}}}
\newcommand{\sId}{\ensuremath{\mathsf{Id}}}
\newcommand{\RR}{\ensuremath{\mathbb{R}}}
\newcommand{\RPP}{\ensuremath{\left]0,+\infty\right[}}
\newcommand{\soft}[1]{\ensuremath{{\:\text{\rm soft}}_{{#1}}\,}}
\newcommand{\hard}[1]{\ensuremath{{\:\text{\rm hard}}_{{#1}}\,}}
\newcommand{\RLX}{\ensuremath{\left[-\infty,+\infty\right[}}
\newcommand{\RX}{\ensuremath{\left]-\infty,+\infty\right]}}
\newcommand{\NN}{\ensuremath{\mathbb N}}
\newcommand{\weakly}{\ensuremath{\:\rightharpoonup\:}}
\newcommand{\ran}{\ensuremath{\text{\rm ran}\,}}
\newcommand{\pinf}{\ensuremath{{+\infty}}}
\newcommand{\minf}{\ensuremath{{-\infty}}}
\newcommand{\tv}{\ensuremath{\text{\rm tv}\,}}
\newcommand{\dom}{\ensuremath{\text{\rm dom}\,}}
\newcommand{\prox}{\ensuremath{\text{\rm prox}}}
\newcommand{\proj}{\ensuremath{\text{\rm proj}}}
\newcommand{\sproj}{\ensuremath{\text{\rm sproj}}}
\newcommand{\Fix}{\ensuremath{\text{\rm Fix}\,}}
\newcommand{\ii}{\ensuremath{\mathrm i}}
\newcommand{\card}{\ensuremath{\text{\rm card}\,}}
\newcommand{\sign}{\ensuremath{\text{\rm sign}}}
\newcommand{\infconv}{\ensuremath{\mbox{\small$\,\square\,$}}}
\newcommand{\zeroun}{\ensuremath{\left]0,1\right[}}
\def\abstract{\noindent{\bfseries Abstract}. \ignorespaces}
\newtheorem{theorem}{Theorem}[section]
\newtheorem{lemma}[theorem]{Lemma}
\newtheorem{corollary}[theorem]{Corollary}
\newtheorem{proposition}[theorem]{Proposition}
\theoremstyle{plain}{\theorembodyfont{\rmfamily}%
\newtheorem{example}[theorem]{Example}}
\theoremstyle{plain}{\theorembodyfont{\rmfamily}%
\newtheorem{remark}[theorem]{Remark}}
\theoremstyle{plain}{\theorembodyfont{\rmfamily}%
\newtheorem{algorithm}[theorem]{Algorithm}}
\theoremstyle{plain}{\theorembodyfont{\rmfamily}%
}
\theoremstyle{plain}{\theorembodyfont{\rmfamily}%
}
\theoremstyle{plain}{\theorembodyfont{\rmfamily}%
\newtheorem{definition}[theorem]{Definition}}
\theoremstyle{plain}{\theorembodyfont{\rmfamily}%
\newtheorem{problem}[theorem]{Problem}}
\numberwithin{equation}{section}
\begin{document}

\title{\sffamily\huge Reconstruction of Functions from \\
Prescribed Proximal Points\thanks{Contact 
author: P. L. Combettes, {\email{plc@math.ncsu.edu}},
phone:+1 (919) 515 2671. The work of P. L. Combettes was 
supported by the National Science Foundation under grant 
CCF-1715671 and the work of Z. C. Woodstock was supported by 
the National Science Foundation under grant DGE-1746939.}}

\author{Patrick L. Combettes and Zev C. Woodstock\\
\small North Carolina State University,
Department of Mathematics,
Raleigh, NC 27695-8205, USA\\
\small \email{plc@math.ncsu.edu}\,, \email{zwoodst@ncsu.edu}\\
}

\date{{~}}
\maketitle

\centerline{\emph{Dedicated to the memory of Noli N. Reyes
(1963--2020)}}

\bigskip

\begin{abstract} 
Under investigation is the problem of finding the best
approximation of a function in a Hilbert space subject to convex
constraints and prescribed nonlinear transformations. We show that
in many instances these prescriptions can be represented using
firmly nonexpansive operators, even when the original observation
process is discontinuous. The proposed framework thus captures a
large body of classical and contemporary best approximation
problems arising in areas such as harmonic analysis, statistics,
interpolation theory, and signal processing. The resulting problem
is recast in terms of a common fixed point problem and solved with
a new block-iterative algorithm that features approximate
projections onto the individual sets as well as an extrapolated
relaxation scheme that exploits the possible presence of affine
constraints. A numerical application to signal recovery is
demonstrated. 
\end{abstract}

\begin{keywords}
Best approximation algorithm,
constrained interpolation,
firmly nonexpansive operator,
nonlinear signal recovery,
proximal point.
\end{keywords}

\newpage
\section{Introduction}
\label{sec:1}

Let $\HH$ be a real Hilbert space with scalar product
$\scal{\cdot}{\cdot}$ and associated norm $\|\cdot\|$, let
$x_0\in\HH$, let $U$ and $V$ be closed vector subspaces of
$\HH$ with projection operators $\proj_U$ and $\proj_V$,
respectively, and let $p\in V$. The basic best approximation
problem
\begin{equation}
\label{e:1}
\text{minimize}\:\;\|x-x_0\|\quad\text{subject to}\quad
x\in U\quad\text{and}\quad\proj_V x=p
\end{equation}
covers a wide range of scenarios in areas such as harmonic
analysis, signal processing, interpolation theory, and optics
\cite{Noli18,Joat10,Goss11,Jami07,Char20,Mele96,Mont82,%
Reye13,Youl78}.
In this setting, a function of interest $\overline{x}\in\HH$ is
known to lie in the subspace $U$ and its projection $p$ onto the
subspace $V$ is known. The goal of \eqref{e:1} is then to find
the best approximation to $x_0$ that is compatible with these two
pieces of information. For example, band-limited extrapolation
\cite{Papo75} aims at recovering a minimum energy
band-limited function
$\overline{x}\in\HH=L^2(\RR)$ from the knowledge of its values on
an interval $A$. This corresponds to the instance of \eqref{e:1} in
which $x_0=0$, $V$ is the subspace of functions vanishing outside
of $A$, $U$ is the subspace of functions with Fourier transform
supported by a compact interval around the origin, and
$p=1_A\overline{x}$, where $1_A$ denotes the characteristic
function of $A$. As shown in \cite{Youl78}, if \eqref{e:1} is
feasible (see \cite{Joat10} for necessary and sufficient
conditions), then the sequence $(x_n)_{n\in\NN}$ constructed by
iterating 
\begin{equation}
\label{e:youla78}
(\forall n\in\NN)\quad x_{n+1}=p+\proj_Ux_n-
\proj_V(\proj_Ux_n)
\end{equation}
converges strongly to its solution. The extension of \eqref{e:1} to
finitely many vector subspaces $(U_j)_{j\in J}$ and 
$(V_k)_{k\in K}$ investigated in \cite{Joat10} is to 
\begin{equation}
\label{e:81}
\text{minimize}\:\;\|x-x_0\|\quad\text{subject to}\:\;
x\in\bigcap_{j\in J} U_j\:\;\text{and}\:\;
(\forall k\in K)\:\;\proj_{V_k}x=p_k,\:\;\text{where}\:\;
p_k\in V_k,
\end{equation}
and it can be solved using affine projection methods.
In many applications, the constraint sets 
\cite{Chie14,Chui92,Aiep96,Sign03,Deut97,Fava40,Micc88,Mula98}
or the operators yielding the prescribed values $(p_k)_{k\in K}$
\cite{Avil17,Boch13,Fouc17,Marm20,Renc19,Teml98,Tesh19}
may not be linear. Our objective is to extend the linear
formulation \eqref{e:81} by employing closed convex constraint
subsets $(C_j)_{j\in J}$, together with prescriptions
$(p_k)_{k\in K}$ resulting from nonlinear operators 
$(F_k)_{k\in K}$, i.e., 
\begin{equation}
\label{e:80}
\text{minimize}\:\;\|x-x_0\|\quad\text{subject to}\quad
x\in\bigcap_{j\in J} C_j\quad\text{and}\quad
(\forall k\in K)\quad F_kx=p_k.
\end{equation}
In view of \eqref{e:81}, projection operators onto closed convex
sets constitute a natural class of candidates for the operators
$(F_k)_{k\in K}$. For instance, in \cite{Renc19,Stud12,Tesh19},
$F_k$ is the projection operator onto a hypercube. However, many
prescriptions $(p_k)_{k\in K}$ found in the literature, in
particular those of \cite{Boch13,Fouc17,Marm20,Teml98}, do
not reduce to best approximations from closed convex sets, and a
more general formalism must be considered to represent them. A
generalization of the notion of a best approximation was proposed
by Moreau \cite{Mor62b}, who called the \emph{proximal point} of
$\overline{x}\in\HH$ relative to a proper lower semicontinuous
convex function $f_k\colon\HH\to\RX$ the unique minimizer
$p_k\in\HH$ of the function 
\begin{equation}
\label{e:jjm1}
y\mapsto f_k(y)+\frac{1}{2}\|\overline{x}-y\|^2, 
\end{equation}
and wrote $p_k=\prox_{f_k}\overline{x}$.
This mechanism defines the proximity operator
$\prox_{f_k}\colon\HH\to\HH$ of $f_k$. The case of a
projector onto a nonempty closed convex set $D_k\subset\HH$ is
recovered by letting $f_k=\iota_{D_k}$, where
\begin{equation}
\label{e:iota}
(\forall x\in\HH)\quad\iota_{D_k}(x)=
\begin{cases}
0,&\text{if}\;\;x\in D_k;\\
\pinf,&\text{if}\;\;x\notin D_k
\end{cases}
\end{equation}
is the indicator function of $D_k$. Proximity operators were
initially motivated by applications in mechanics
\cite{Brog16,Mor63m,More66} and have become a central tool in the
analysis and the numerical solution of numerous data processing
tasks \cite{Svva20,Smms05}. We shall see later that they also model
various nonlinear observation processes. The properties of
proximity operators are detailed in \cite[Chapter~24]{Livre1},
among which is the fact that the operator $\prox_{f_k}$ can be
expressed as the resolvent of the subdifferential of $f_k$, that
is, $\prox_{f_k}=(\Id+\partial f_k)^{-1}$, where 
\begin{equation}
(\forall x\in\HH)\quad\partial f_k(x)=\menge{u\in\HH}
{(\forall y\in\HH)\;\;\scal{y-x}{u}+f_k(x)\leq f_k(y)}.
\end{equation}
As shown by Moreau \cite{More65}, the set-valued operator 
$A_k=\partial f_k$ is maximally monotone, i.e.,
\begin{equation}
\label{e:mm}
(\forall x\in\HH)(\forall u\in\HH)\quad\big[\:u\in
A_kx\quad\Leftrightarrow\quad(\forall y\in\HH)
(\forall v\in A_ky)\quad\scal{x-y}{u-v}\geq 0\:\big].
\end{equation}
This property prompted
Rockafellar \cite{Roc76a} to generalize the notion of a proximal
point as follows: given a maximally monotone set-valued operator
$A_k\colon\HH\to 2^{\HH}$, the proximal point of
$\overline{x}\in\HH$ relative to $A_k$ is the unique point
$p_k\in\HH$ such that $\overline{x}-p_k\in A_kp_k$, i.e., 
$p_k=J_{A_k}\overline{x}$, where
$J_{A_k}=(\Id+A_k)^{-1}\colon\HH\to\HH$ is the resolvent of $A_k$. 
As stated in \cite[Corollary~23.9]{Livre1}, 
a remarkable consequence of Minty's theorem \cite{Mint62}
is that an operator $F_k\colon\HH\to\HH$ is the resolvent of a
maximally monotone operator $A_k\colon\HH\to 2^{\HH}$ if and only
if it is \emph{firmly nonexpansive}, meaning that
\begin{equation}
\label{e:f}
(\forall x\in\HH)(\forall y\in\HH)\quad
\|F_kx-F_ky\|^2+\|(\Id-F_k)x-(\Id-F_k)y\|^2\leq\|x-y\|^2.
\end{equation}
In view of this equivalence, we call $p_k$ a \emph{proximal point}
of $\overline{x}\in\HH$ relative to a firmly nonexpansive operator
$F_k\colon\HH\to\HH$ if $p_k=F_k\overline{x}$. As we shall show in
Section~\ref{sec:2}, firmly nonexpansive operators constitute a
powerful device to represent a variety of nonlinear processes to
generate the prescriptions $(p_k)_{k\in K}$ in \eqref{e:80}. In
light of these considerations, we propose to investigate the
following nonlinear best approximation framework. 

\begin{problem}
\label{prob:1}
Let $x_0\in\HH$ and let $J$ and $K$ be at most countable sets such
that $J\cap K=\emp$ and $J\cup K\neq\emp$. For every $j\in J$, let
$C_j$ be a closed convex subset of $\HH$ and, for every $k\in K$,
let $p_k\in\HH$ and let $F_k\colon\HH\to\HH$ be a firmly
nonexpansive operator. Suppose that there exists 
$\overline{x}\in\bigcap_{j\in J}C_j$ such that $(\forall k\in K)$ 
$F_k\overline{x}=p_k$. The task is to 
\begin{equation}
\label{e:prob1}
\text{minimize}\:\;\|x-x_0\|\quad\text{subject to}\quad
x\in\bigcap_{j\in J}C_j\quad\text{and}\quad
(\forall k\in K)\quad F_kx=p_k.
\end{equation}
\end{problem}

In Problem~\ref{prob:1}, the function of interest lies in the
intersection of the sets $(C_j)_{j\in J}$, and its proximal points
$(p_k)_{k\in K}$ relative to firmly nonexpansive operators
$(F_k)_{k\in K}$ are prescribed. The objective is to obtain the
best approximation to a function $x_0\in\HH$ from the set of
functions which satisfy these properties. 

As noted above, the numerical solution of the linear problem
\eqref{e:81} is rather straightforward with existing projection
techniques, while characterizing the existence of solutions for
any choices of the prescribed values $(p_k)_{k\in K}$ -- the
so-called inverse best approximation property -- is a more
challenging task that was carried out in \cite{Joat10}. In the
nonlinear setting, this property is of limited interest since it
fails in simple scenarios \cite[Remark~1.2]{Joat10}. Our objectives
in the present paper are to demonstrate the far reach and the
versatility of Problem~\ref{prob:1}, and to devise an efficient and
flexible numerical method to solve it. 

The remainder of the paper consists of four sections. In
Section~\ref{sec:2}, we show the ability of our proximal point
modeling to capture a variety of observation processes arising in
practice, including some which result from discontinuous
operators. In Section~\ref{sec:3}, we propose a new
block-iterative algorithm to construct the best approximation to a
reference point from a countable intersection of closed convex
sets. The algorithm features approximate projections onto the
individual sets as well as an extrapolated relaxation scheme that
exploits the possible presence of affine subspaces in the
constraint sets $(C_j)_{j\in J}$. In Section~\ref{sec:4},
Problem~\ref{prob:1} is rephrased in terms of a common fixed point
problem and the algorithm of Section~\ref{sec:3} is used to solve
it. A numerical illustration of our framework is presented in
Section~\ref{sec:5}.

\noindent
{\bfseries Notation.} 
$\HH$ is a real Hilbert space with scalar product
$\scal{\cdot}{\cdot}$, associated norm $\|\cdot\|$, and identity
operator $\Id$. The family of all subsets of $\HH$ is denoted by
$2^{\HH}$. The expressions $x_n\weakly x$ and $x_n\to x$ denote,
respectively, the weak and the strong convergence of a sequence
$(x_n)_{n\in\NN}$ to $x$ in $\HH$. The distance function to a
subset $C$ of $\HH$ is denoted by $d_C$. $\Gamma_0(\HH)$ is the
class of all lower semicontinuous convex functions from $\HH$ to
$\RX$ which are proper in the sense that they are not identically
$\pinf$. The conjugate of $f\in\Gamma_0(\HH)$ is denoted by $f^*$
and the infimal convolution operation by $\infconv$.
The set of fixed points of an operator $T\colon\HH\to\HH$
is $\Fix T=\menge{x\in\HH}{Tx=x}$. The Hilbert direct 
sum of a family of real Hilbert spaces $(\HS_i)_{i\in\II}$ is 
denoted by $\bigoplus_{i\in\II}\HS_i$. For background on convex and
nonlinear analysis, see \cite{Livre1}.

\section{Prescribed values as proximal points}
\label{sec:2}

We illustrate the fact that the proximal model adopted in
Problem~\ref{prob:1} captures a wealth of scenarios encountered in
various areas to represent information on the ideal underlying
function $\overline{x}\in\HH$ obtained through some observation
process. We discuss firmly nonexpansive observation processes in
Section~\ref{sec:21} and cocoercive ones in 
Section~\ref{sec:22}. In Section~\ref{sec:23}, we move to more
general models in which the operators need not be Lipschitzian or
even continuous. 

\subsection{Prescriptions derived from firmly nonexpansive
operators}
\label{sec:21}
We start with an instance of a proximal point prescription arising
in a decomposition setting. 

\begin{proposition}
\label{p:7}
Let $(\HS_i)_{i\in\II}$ be an at most countable family of real
Hilbert spaces, let $\HH=\bigoplus_{i\in\II}\HS_i$, let
$\overline{x}\in\HH$, and let 
$(\overline{\mathsf{x}}_i)_{i\in\II}$ be its decomposition, i.e.,
$(\forall i\in\II)$ $\overline{\mathsf{x}}_i\in\HS_i$. For every
$i\in\II$, let $\mathsf{F}_i\colon\HS_i\to\HS_i$ be a firmly
nonexpansive operator. If $\II$ is infinite, suppose that there
exists $z=(\mathsf{\mathsf{z}}_i)_{i\in\II}\in\HH$ such that
$\sum_{i\in\II}\|\mathsf{F}_i\mathsf{z}_i-\mathsf{z}_i\|^2<\pinf$.
Set $F\colon\HH\to\HH\colon x=(\mathsf{\mathsf{x}}_i)_{i\in\II}
\mapsto (\mathsf{F}_i\mathsf{x}_i)_{i\in\II}$ and 
$p=(\mathsf{F}_i\overline{\mathsf{x}}_i)_{i\in\II}$. 
Then $p$ is the proximal point of $\overline{x}$ relative to $F$.
\end{proposition}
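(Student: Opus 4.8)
The plan is to verify that $p$ satisfies the defining property of the proximal point relative to the firmly nonexpansive operator $F$, namely that $F$ is itself firmly nonexpansive on $\HH$ and that $F\overline{x}=p$. The second identity is immediate from the definition of $F$ and $p$. So the heart of the matter is to check that $F$ is a well-defined firmly nonexpansive operator on the Hilbert direct sum $\HH=\bigoplus_{i\in\II}\HS_i$.

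First I would address well-definedness, which is only an issue when $\II$ is infinite: one must show that for every $x=(\mathsf{x}_i)_{i\in\II}\in\HH$ the family $(\mathsf{F}_i\mathsf{x}_i)_{i\in\II}$ lies in $\HH$, i.e. $\sum_{i\in\II}\|\mathsf{F}_i\mathsf{x}_i\|^2<\pinf$. Here the hypothesis furnishing $z=(\mathsf{z}_i)_{i\in\II}\in\HH$ with $\sum_{i\in\II}\|\mathsf{F}_i\mathsf{z}_i-\mathsf{z}_i\|^2<\pinf$ enters. Since each $\mathsf{F}_i$ is firmly nonexpansive it is in particular nonexpansive, so $\|\mathsf{F}_i\mathsf{x}_i-\mathsf{F}_i\mathsf{z}_i\|\leq\|\mathsf{x}_i-\mathsf{z}_i\|$; then a triangle-inequality estimate
\begin{equation}
\|\mathsf{F}_i\mathsf{x}_i\|\leq\|\mathsf{F}_i\mathsf{x}_i-\mathsf{F}_i\mathsf{z}_i\|+\|\mathsf{F}_i\mathsf{z}_i-\mathsf{z}_i\|+\|\mathsf{z}_i\|\leq\|\mathsf{x}_i-\mathsf{z}_i\|+\|\mathsf{F}_i\mathsf{z}_i-\mathsf{z}_i\|+\|\mathsf{z}_i\|,
\end{equation}
combined with $\|\mathsf{x}_i-\mathsf{z}_i\|\leq\|\mathsf{x}_i\|+\|\mathsf{z}_i\|$ and the fact that $\ell^2$-type sequences are closed under addition, shows $(\mathsf{F}_i\mathsf{x}_i)_{i\in\II}\in\HH$. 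The same computation applied to $x-y$ and using that $(\mathsf{F}_ix_i-\mathsf{F}_iy_i)_i$ is dominated by $(x_i-y_i)_i$ shows $Fx-Fy\in\HH$, so all the norms appearing below are finite.

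Next I would establish the firm nonexpansiveness inequality \eqref{e:f} for $F$. The key observation is that the direct-sum norm decomposes coordinatewise: for any $x=(\mathsf{x}_i)_i$, $y=(\mathsf{y}_i)_i$ in $\HH$,
\begin{equation}
\|Fx-Fy\|^2+\|(\Id-F)x-(\Id-F)y\|^2=\sum_{i\in\II}\Big(\|\mathsf{F}_i\mathsf{x}_i-\mathsf{F}_i\mathsf{y}_i\|^2+\|(\sId-\mathsf{F}_i)\mathsf{x}_i-(\sId-\mathsf{F}_i)\mathsf{y}_i\|^2\Big).
\end{equation}
Applying \eqref{e:f} to each $\mathsf{F}_i$ bounds the $i$th summand above by $\|\mathsf{x}_i-\mathsf{y}_i\|^2$, and summing over $i\in\II$ yields exactly $\|x-y\|^2$. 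Hence $F$ is firmly nonexpansive on $\HH$, and therefore, by the equivalence recalled before Problem~\ref{prob:1} (Minty's theorem, \cite[Corollary~23.9]{Livre1}), it is the resolvent of a maximally monotone operator, so the proximal point of $\overline{x}$ relative to $F$ is well defined and equals $F\overline{x}=(\mathsf{F}_i\overline{\mathsf{x}}_i)_{i\in\II}=p$.

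The main obstacle is the infinite-dimensional bookkeeping in the well-definedness step: one has to be careful that $Fx$ genuinely lands in the direct sum and is not merely a formal family, and it is precisely for this that the summability hypothesis on $z$ is needed — without it, $F$ need not map $\HH$ into $\HH$ (take $\mathsf{F}_i$ a nonzero constant map on each $\HS_i$). Once well-definedness is secured, the firm nonexpansiveness is a routine coordinatewise summation, and the identification of $p$ as the proximal point is immediate from the definition adopted in the paper.
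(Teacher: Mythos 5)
Your proposal is correct and follows essentially the same route as the paper: well-definedness of $F$ on the infinite direct sum via the triangle inequality through the point $z$ together with nonexpansiveness of each $\mathsf{F}_i$ (the paper phrases the same estimate with squared norms and a factor $1/3$), followed by a coordinatewise summation of the firm nonexpansiveness inequalities. No gaps.
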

\begin{proof}
If $\II$ is infinite, we have
\begin{align}
(\forall x\in\HH)\quad
\dfrac{1}{3}\sum_{i\in\II}
\|\mathsf{F}_i\mathsf{x}_i\|^2
&\leq\sum_{i\in\II}
\|\mathsf{F}_i\mathsf{x}_i-\mathsf{F}_i\mathsf{z}_i\|^2+
\sum_{i\in\II}\|\mathsf{F}_i\mathsf{z}_i-\mathsf{z}_i\|^2+
\sum_{i\in\II}\|\mathsf{z}_i\|^2
\nonumber\\
&\leq\sum_{i\in\II}
\|\mathsf{x}_i-\mathsf{z}_i\|^2+
\sum_{i\in\II}\|\mathsf{F}_i\mathsf{z}_i-\mathsf{z}_i\|^2
+\|z\|^2
\nonumber\\
&=\|x-z\|^2+
\sum_{i\in\II}\|\mathsf{F}_i\mathsf{z}_i-\mathsf{z}_i\|^2
+\|z\|^2
\nonumber\\
&<\pinf.
\end{align}
This shows that, in all cases, $F$ is well defined and $p\in\HH$.
Furthermore,
\begin{align}
(\forall x\in\HH)(\forall y\in\HH)\quad\|Fx-Fy\|^2
&=\sum_{i\in\II}
\|\mathsf{F}_i\mathsf{x}_i-\mathsf{F}_i\mathsf{y}_i\|^2\nonumber\\
&\leq\sum_{i\in\II}\|\mathsf{x}_i-\mathsf{y}_i\|^2-
\sum_{i\in\II}\|(\sId-\mathsf{F}_i)\mathsf{x}_i
-(\sId-\mathsf{F}_i)\mathsf{y}_i\|^2\nonumber\\
&=\|x-y\|^2-\|(\Id-F)x-(\Id-F)y\|^2.
\end{align}
Thus, $F$ is firmly nonexpansive.
\end{proof}

\begin{corollary}
\label{c:7}
Let $(\HS_i)_{i\in\II}$ be an at most countable family of real
Hilbert spaces, let $\HH=\bigoplus_{i\in\II}\HS_i$, let
$\overline{x}\in\HH$, and let 
$(\overline{\mathsf{x}}_i)_{i\in\II}$ be its decomposition. For
every $i\in\II$, let $\mathsf{f}_i\in\Gamma_0(\HS_i)$ and, if $\II$
is infinite, suppose that 
$\mathsf{f}_i\geq 0=\mathsf{f}_i(\mathsf{0})$.
Then $p=(\prox_{\mathsf{f}_i}\overline{\mathsf{x}}_i)_{i\in\II}$ 
is a proximal point of $\overline{x}$, namely, 
$p=\prox_{f}\overline{x}$, where 
$f\colon\HH\to\RX\colon{x}=({\mathsf{x}}_i)_{i\in\II}
\mapsto\sum_{i\in\II}\mathsf{f}_i(\mathsf{x}_i)$.
\end{corollary}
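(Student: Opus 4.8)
The plan is to apply Proposition~\ref{p:7} with $\mathsf{F}_i=\prox_{\mathsf{f}_i}$ for every $i\in\II$. Each of these operators is firmly nonexpansive: since $\prox_{\mathsf{f}_i}=(\sId+\partial\mathsf{f}_i)^{-1}$ is the resolvent of the maximally monotone operator $\partial\mathsf{f}_i$, this follows from the characterization of firmly nonexpansive operators recalled in Section~\ref{sec:1} (see also \cite{Livre1}). To invoke Proposition~\ref{p:7} when $\II$ is infinite, I need $z\in\HH$ with $\sum_{i\in\II}\|\mathsf{F}_i\mathsf{z}_i-\mathsf{z}_i\|^2<\pinf$, and $z=0$ does the job: because $\mathsf{f}_i\geq 0=\mathsf{f}_i(\mathsf{0})$, the strictly convex function $\mathsf{y}\mapsto\mathsf{f}_i(\mathsf{y})+\tfrac{1}{2}\|\mathsf{y}\|^2$ attains its minimum value $0$ at $\mathsf{y}=\mathsf{0}$, so $\prox_{\mathsf{f}_i}\mathsf{0}=\mathsf{0}$ and the series vanishes. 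Proposition~\ref{p:7} then yields that $F\colon x=(\mathsf{x}_i)_{i\in\II}\mapsto(\prox_{\mathsf{f}_i}\mathsf{x}_i)_{i\in\II}$ is a well-defined firmly nonexpansive operator on $\HH$ and that $p=F\overline{x}\in\HH$.

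It remains to recognize $F\overline{x}$ as $\prox_f\overline{x}$, which first requires checking that $f\in\Gamma_0(\HH)$. Convexity is immediate, and lower semicontinuity is clear when $\II$ is finite; when $\II$ is infinite, $f$ equals the pointwise supremum, over the finite subsets $S$ of $\II$, of the lower semicontinuous functions $x\mapsto\sum_{i\in S}\mathsf{f}_i(\mathsf{x}_i)$ --- the identity relying on the nonnegativity of the $\mathsf{f}_i$'s --- hence $f$ is lower semicontinuous. Properness holds since $f(\mathsf{0})=\sum_{i\in\II}\mathsf{f}_i(\mathsf{0})=0$ in the infinite case, and since the product of the domains $\dom\mathsf{f}_i$ is nonempty in the finite case. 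So $\prox_f$ is well defined.

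Finally, I would use the separability of the minimization defining $\prox_f\overline{x}$: for $y=(\mathsf{y}_i)_{i\in\II}\in\HH$ one has $f(y)+\tfrac{1}{2}\|\overline{x}-y\|^2=\sum_{i\in\II}\mathsf{g}_i(\mathsf{y}_i)$, where $\mathsf{g}_i\colon\mathsf{y}\mapsto\mathsf{f}_i(\mathsf{y})+\tfrac{1}{2}\|\overline{\mathsf{x}}_i-\mathsf{y}\|^2$ is strictly convex with unique minimizer $\mathsf{p}_i=\prox_{\mathsf{f}_i}\overline{\mathsf{x}}_i$. Comparing $\mathsf{g}_i$ at $\mathsf{p}_i$ with its value at $\mathsf{0}$ gives $\mathsf{g}_i(\mathsf{p}_i)\leq\tfrac{1}{2}\|\overline{\mathsf{x}}_i\|^2$, so $\sum_{i\in\II}\mathsf{g}_i(\mathsf{p}_i)\leq\tfrac{1}{2}\|\overline{x}\|^2<\pinf$; combined with $p=(\mathsf{p}_i)_{i\in\II}\in\HH$ from the first step, this shows that $p$ is admissible and that $\sum_{i\in\II}\mathsf{g}_i(\mathsf{y}_i)\geq\sum_{i\in\II}\mathsf{g}_i(\mathsf{p}_i)$ for every $y\in\HH$, with equality only if $\mathsf{y}_i=\mathsf{p}_i$ for all $i$. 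Hence $p$ is the unique minimizer, i.e., $p=\prox_f\overline{x}$. (Alternatively, once $f\in\Gamma_0(\HH)$ and $p\in\dom f$ are in hand, one can verify $\overline{x}-p\in\partial f(p)$ directly by summing the coordinatewise subgradient inequalities $\scal{\mathsf{y}_i-\mathsf{p}_i}{\overline{\mathsf{x}}_i-\mathsf{p}_i}+\mathsf{f}_i(\mathsf{p}_i)\leq\mathsf{f}_i(\mathsf{y}_i)$, the scalar products summing absolutely by Cauchy--Schwarz since both $(\mathsf{y}_i-\mathsf{p}_i)_{i\in\II}$ and $(\overline{\mathsf{x}}_i-\mathsf{p}_i)_{i\in\II}$ belong to $\HH$.)

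I expect the only genuine difficulty to lie in the infinite-dimensional bookkeeping of the last two paragraphs: confirming that $f$ truly belongs to $\Gamma_0(\HH)$, that $p\in\dom f$, and that all the series involved converge, so that the per-coordinate minimizations assemble into a single minimizer of $f(\cdot)+\tfrac{1}{2}\|\overline{x}-\cdot\|^2$. This is exactly where the normalization $\mathsf{f}_i\geq 0=\mathsf{f}_i(\mathsf{0})$ --- via the consequences $\prox_{\mathsf{f}_i}\mathsf{0}=\mathsf{0}$ and $\mathsf{f}_i(\mathsf{p}_i)\leq\tfrac{1}{2}\|\overline{\mathsf{x}}_i\|^2$ --- is indispensable.
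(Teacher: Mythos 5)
Your proof is correct and takes essentially the same route as the paper's: invoke Proposition~\ref{p:7} with $\mathsf{F}_i=\prox_{\mathsf{f}_i}$ and $z=0$ (using $\mathsf{f}_i\geq 0=\mathsf{f}_i(\mathsf{0})\Rightarrow\prox_{\mathsf{f}_i}\mathsf{0}=\mathsf{0}$), check that $f\in\Gamma_0(\HH)$, and then identify $p=\prox_f\overline{x}$ by splitting the minimization of $f(\cdot)+\tfrac{1}{2}\|\overline{x}-\cdot\|^2$ coordinatewise. The only differences are presentational: where you argue the lower semicontinuity of $f$ and the fixed point $\prox_{\mathsf{f}_i}\mathsf{0}=\mathsf{0}$ directly, the paper cites \cite[Corollary~9.4]{Livre1} and \cite[Proposition~12.29]{Livre1}.
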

\begin{proof}
We first note that $f$ is proper since the functions 
$(\mathsf{f}_i)_{i\in\II}$ are. Furthermore, we observe that, for
every $i\in\II$, the function 
$f_i\colon\HH\to\RX\colon x\mapsto\mathsf{f}_i(\mathsf{x}_i)$ 
lies in $\Gamma_0(\HH)$. We therefore derive from 
\cite[Corollary~9.4]{Livre1} that $f=\sum_{i\in\II}f_i$ is lower
semicontinuous and convex. This shows that $f\in\Gamma_0(\HH)$ and
consequently that $\prox_f$ is well defined. For every $i\in\II$,
let us introduce the firmly nonexpansive operator 
$\mathsf{F}_i=\prox_{\mathsf{f}_i}$. If $\II$
is infinite, since $\mathsf{0}$ is a minimizer of each of the
functions $(\mathsf{f}_i)_{i\in\II}$, we derive from
\cite[Proposition~12.29]{Livre1} that $(\forall i\in\II)$ 
$\prox_{\mathsf{f}_i}\mathsf{0}=\mathsf{0}$. In turn, 
the condition 
$\sum_{i\in\II}\|\mathsf{F}_i\mathsf{z}_i-\mathsf{z}_i\|^2<\pinf$
holds with $(\forall i\in\II)$ $\mathsf{z}_i=\mathsf{0}$. In view
of Proposition~\ref{p:7}, $p$ is the proximal point of
$\overline{x}$ relative to $F\colon\HH\to\HH\colon x\mapsto
(\prox_{\mathsf{f}_i}{\mathsf{x}}_i)_{i\in\II}$.
Finally, since
\begin{align}
f(\prox_f\overline{x})+\frac{1}{2}
\|\overline{x}-\prox_f\overline{x}\|^2
&=\min_{y\in\HH}\bigg({f}({y})+\frac{1}{2}
\|\overline{x}-y\|^2\bigg)\nonumber\\
&=\min_{y\in\HH}\sum_{i\in\II}
\bigg(\mathsf{f}_i(\mathsf{y}_i)+\frac{1}{2}
\|\overline{\mathsf{x}}_i-\mathsf{y}_i\|^2\bigg)\nonumber\\
&=\sum_{i\in\II}\min_{\mathsf{y}_i\in\HS_i}
\bigg(\mathsf{f}_i(\mathsf{y}_i)
+\frac{1}{2}\|\overline{\mathsf{x}}_i-\mathsf{y}_i\|^2\bigg)
\nonumber\\
&=\sum_{i\in\II}\bigg(\mathsf{f}_i(\prox_{\mathsf{f}_i}
\overline{\mathsf{x}}_i)+\frac{1}{2}\|\overline{\mathsf{x}}_i-
\prox_{\mathsf{f}_i}\overline{\mathsf{x}}_i\|^2\bigg)\nonumber\\
&=f(p)+\frac{1}{2}\|\overline{x}-p\|^2,
\end{align}
we conclude that $p=\prox_f\overline{x}$.
\end{proof}

\begin{corollary}
\label{c:8}
Suppose that $\HH$ is separable, let $(e_i)_{i\in\II}$ be an
orthonormal basis of $\HH$, and let $\overline{x}\in\HH$. For 
every $i\in\II$, let $\beta_i\in\RPP$ and let
$\varrho_i\colon\RR\to\RR$ be increasing and
$1/\beta_i$-Lipschitzian.
If $\II$ is infinite, suppose that $(\forall i\in\II)$
$\varrho_i(0)=0$. Then 
$p=\sum_{i\in\II}\beta_i\varrho_i(\scal{\overline{x}}{e_i})e_i$
is a proximal point of $\overline{x}$.
\end{corollary}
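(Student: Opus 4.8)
The plan is to read Corollary~\ref{c:8} as the coordinate form of Proposition~\ref{p:7} (equivalently, of Corollary~\ref{c:7}). Since $\HH$ is separable with orthonormal basis $(e_i)_{i\in\II}$, the Parseval map $L\colon\HH\to\bigoplus_{i\in\II}\RR\colon x\mapsto(\scal{x}{e_i})_{i\in\II}$ is a surjective linear isometry, under which the decomposition of $\overline{x}$ is $(\scal{\overline{x}}{e_i})_{i\in\II}$. Accordingly I would set $\HS_i=\RR$ for every $i\in\II$ and introduce, for every $i\in\II$, the scalar operator $\mathsf{F}_i\colon\RR\to\RR\colon\xi\mapsto\beta_i\varrho_i(\xi)$.

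The one genuine verification is that each $\mathsf{F}_i$ is firmly nonexpansive. Because $\varrho_i$ is increasing and $\beta_i>0$, the map $\mathsf{F}_i$ is nondecreasing, and because $\varrho_i$ is $1/\beta_i$-Lipschitzian, $\mathsf{F}_i$ is $1$-Lipschitzian. I would then observe that on the real line these two properties imply \eqref{e:f}: given $\xi\geq\eta$ in $\RR$, set $d=\xi-\eta\geq 0$ and $a=\mathsf{F}_i\xi-\mathsf{F}_i\eta$; monotonicity and the Lipschitz bound yield $0\leq a\leq d$, whence $a^2+(d-a)^2=d^2-2a(d-a)\leq d^2$, which is exactly \eqref{e:f} in dimension one. (Alternatively, since every maximally monotone operator on $\RR$ is a subdifferential, one may write $\mathsf{F}_i=\prox_{\mathsf{f}_i}$ for some $\mathsf{f}_i\in\Gamma_0(\RR)$ and invoke Corollary~\ref{c:7}; the direct check above is shorter.)

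Next I would dispose of the summability requirement of Proposition~\ref{p:7} in the case where $\II$ is infinite: the hypothesis $\varrho_i(0)=0$ gives $\mathsf{F}_i 0=0$, so taking $\mathsf{z}_i=0$ for every $i\in\II$ makes $\sum_{i\in\II}\|\mathsf{F}_i\mathsf{z}_i-\mathsf{z}_i\|^2=0<\pinf$. Proposition~\ref{p:7} then applies (the case of finite $\II$ being immediate) and shows that the diagonal operator $(\xi_i)_{i\in\II}\mapsto(\mathsf{F}_i\xi_i)_{i\in\II}$ is a well-defined firmly nonexpansive operator on $\bigoplus_{i\in\II}\RR$, and that $(\mathsf{F}_i\scal{\overline{x}}{e_i})_{i\in\II}=(\beta_i\varrho_i(\scal{\overline{x}}{e_i}))_{i\in\II}$ is its proximal point; in particular this family lies in $\bigoplus_{i\in\II}\RR$, so the series $\sum_{i\in\II}\beta_i\varrho_i(\scal{\overline{x}}{e_i})e_i$ converges in $\HH$ to a point $p$.

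To finish, I would transport the diagonal operator back through $L$: conjugation by an isometry preserves \eqref{e:f}, so $F\colon\HH\to\HH\colon x\mapsto\sum_{i\in\II}\beta_i\varrho_i(\scal{x}{e_i})e_i$ is a well-defined firmly nonexpansive operator with $F\overline{x}=p$, which is precisely the assertion that $p$ is a proximal point of $\overline{x}$. I do not anticipate any serious obstacle: the only step needing an actual computation is the one-dimensional firm nonexpansiveness of $\mathsf{F}_i$, and everything else is the identification of $\HH$ with $\bigoplus_{i\in\II}\RR$ together with a routine invocation of Proposition~\ref{p:7}.
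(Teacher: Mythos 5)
Your proof is correct and follows essentially the same route as the paper's: both reduce to Proposition~\ref{p:7} applied to the diagonal operator on $\ell^2(\II)$ and then transport back through the Parseval isometry $L$. The only difference is that you spell out the one-dimensional firm nonexpansiveness of an increasing nonexpansive map and the summability condition via $\mathsf{z}_i=0$, both of which the paper's proof leaves implicit.
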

\begin{proof}
For every $i\in\II$, $\beta_i\varrho_i$ is increasing and
nonexpansive, hence firmly nonexpansive. We then deduce from
Proposition~\ref{p:7} that
$\Phi\colon\ell^2(\II)\to\ell^2(\II)\colon(\xi_i)_{i\in\II}\mapsto
(\beta_i\varrho_i(\xi_i))_{i\in\II}$ is firmly nonexpansive. Now
set $L\colon\HH\to\ell^2(\II)\colon x\mapsto
(\scal{x}{e_i})_{i\in\II}$ and $F=L^*\circ\Phi\circ L$. 
Since $\|L\|=1$, it follows from
\cite[Corollary~4.13]{Livre1} that $F$ is firmly nonexpansive. This
shows that $p=L^*(\Phi(L\overline{x}))$ is the proximal point of
$\overline{x}$ relative to $F$.
\end{proof}

\begin{example}
\label{ex:19}
In the context of Corollary~\ref{c:8}, for every $i\in\II$, let
$\omega_{i}\in[0,1]$, let $\eta_i\in\RPP$, let $\delta_i\in\RPP$,
and set $\varrho_i\colon\xi\mapsto(2\omega_i/\pi)
\text{\rm arctan}(\eta_i\,\xi)+(1-\omega_i)
\sign(\xi)(1-\text{\rm exp}(-\delta_i|\xi|))$.
Then, for every $i\in\II$, $\varrho_i$ is increasing and 
$(2\omega_i\eta_i/\pi+(1-\omega_i)\delta_i)$-Lipschitzian with
$\varrho_i(0)=0$. The resulting proximal point 
\begin{equation}
p=\sum_{i\in\II}
\dfrac{\varrho_i(\scal{\overline{x}}{e_i})}
{2\omega_i\eta_i/\pi+(1-\omega_i)\delta_i}e_i
\end{equation}
models a parallel distortion of the original signal $\overline{x}$ 
\cite[Sections~10.6 \& 13.5]{Tarr19}.
\end{example}

\begin{example}[shrinkage]
\label{ex:5}
In signal processing and statistics, a powerful idea is to
decompose a function $\overline{x}\in\HH$ in an orthonormal basis
$(e_i)_{i\in\II}$ and to transform the coefficients of the
decomposition to construct nonlinear approximations with certain
attributes such as sparsity
\cite{Cham98,Siop07,Smms05,Daub04,Demo09,Dono94,Taov00}. As noted
in \cite{Smms05}, a broad model in this context is 
\begin{equation}
\label{e:jjm2}
p=\sum_{i\in\II}\big(\prox_{\phi_i}\scal{\overline{x}}{e_i}
\big)e_i
\end{equation} 
where, for every $i\in\II$, the function 
$\phi_i\in\Gamma_0(\RR)$
satisfies $\phi_i\geq 0=\phi_i(0)$ and models prior information on
the coefficient $\scal{\overline{x}}{e_i}$. 
The problem is then 
to reconstruct $\overline{x}$ given its shrunk version $p$. For
instance, in the classical work of \cite{Dono94}, $(e_i)_{i\in\II}$
is a wavelet basis and $(\forall i\in\II)$ $\phi_i=\omega|\cdot|$,
with $\omega\in\RPP$. This yields
$p=\sum_{i\in\II}(\sign(\scal{\overline{x}}{e_i})
\max\{\abscal{\overline{x}}{e_i}-\omega,0\})e_i$. In general, 
to see that $p$ in \eqref{e:jjm2} is a proximal point of
$\overline{x}$, it suffices to apply Corollary~\ref{c:8} with,
for every $i\in\II$, $\beta_i=1$ and 
$\varrho_i=\prox_{\phi_i}$, whence $\varrho_i(0)=0$
by \cite[Proposition~12.29]{Livre1}. More precisely, 
\cite[Proposition~24.16]{Livre1} entails that
$p$ is the proximal point of $\overline{x}$ relative to 
the function $f\colon\HH\to\RX\colon x\mapsto
\sum_{i\in\II}\phi_i(\scal{x}{e_i})$.
\end{example}

\begin{example}[partitioning]
\label{ex:7}
Let $(\Omega,\EuScript{F},\mu)$ be a measure space and let 
$(\Omega_i)_{i\in\II}$ be an at most countable 
$\EuScript{F}$-partition of $\Omega$. Let us consider the
instantiation of Proposition~\ref{p:7} in which 
$\HH=L^2(\Omega,\EuScript{F},\mu)$ and, 
for every $i\in\II$, 
$\HS_i=L^2(\Omega_i,\EuScript{F}_i,\mu)$, where
$\EuScript{F}_i=\menge{\Omega_i\cap S}{S\in\EuScript{F}}$.
Let $\overline{x}\in\HH$ and $(\forall i\in\II)$ 
$\overline{\mathsf{x}}_i=\overline{x}|_{\Omega_i}$. Moreover, for
every $i\in\II$, $\phi_i$ is an even function in $\Gamma_0(\RR)$
such that $\phi_i(0)=0$ and $\phi_i\neq\iota_{\{0\}}$, and we set
$\rho_i=\text{\rm max}\,\partial\phi_i(0)$. Then we derive from
Corollary~\ref{c:7} and \cite[Proposition~2.1]{Nmtm09} that the 
proximal point of $\overline{x}$ relative to 
$f\colon x\mapsto\sum_{i\in\II}\phi_i(\|\mathsf{x}_i\|)$ is
\begin{equation}
\label{e:ow}
p=\Big(\big(\prox_{\phi_i}\|\overline{\mathsf{x}}_i\|\big)
\mathsf{u}_{\rho_i}(\overline{\mathsf{x}}_i)\Big)_{i\in\II},
\quad\text{where}\quad\mathsf{u}_{\rho_i}\colon\HS_i\to\HS_i
\colon{\mathsf{x}}_i\mapsto
\begin{cases}
{\mathsf{x}}_i/\|{\mathsf{x}}_i\|,&\text{if}
\;\;\|{\mathsf{x}}_i\|>\rho_i;\\
\mathsf{0},&\text{if}\;\;\|{\mathsf{x}}_i\|\leq\rho_i.
\end{cases}
\end{equation}
For each $i\in\II$, this process eliminates the $i$th block
$\overline{\mathsf{x}}_i$ if its norm is less than 
$\rho_i\in\RPP$. 
\end{example}

\begin{example}[group shrinkage]
\label{ex:4}
In Example~\ref{ex:7}, suppose that $\Omega=\{1,\ldots,N\}$,
$\EuScript{F}=2^\Omega$, and $\mu$ is the counting measure. Then
$\HH$ is the standard Euclidean space $\RR^N$, which is decomposed
in $m$ factors as $\RR^N=\RR^{N_1}\times\cdots\times\RR^{N_m}$, 
where $\sum_{i=1}^mN_i=N$. Now suppose that 
$(\forall i\in\II=\{1,\ldots,m\})$ $\phi_i=\rho_i|\cdot|$, where
$\rho_i\in\RPP$. Then it follows from \cite[Example~14.5]{Livre1}
that the proximal point $p$ of \eqref{e:ow} is
obtained by group-soft thresholding the vector 
$\overline{x}=(\overline{\mathsf{x}}_1,\ldots,
\overline{\mathsf{x}}_m)\in\RR^N$, that is \cite{Yuan06},
\begin{equation}
p=\bigg(
\bigg(1-\frac{\rho_1}
{\max\{\|\overline{\mathsf{x}}_1\|,\rho_1\}}\bigg)
\overline{\mathsf{x}}_1,\ldots,\bigg(1-\frac{\rho_m}
{\max\{\|\overline{\mathsf{x}}_m\|,\rho_m\}}\bigg)
\overline{\mathsf{x}}_m\bigg).
\end{equation}
\end{example}

\subsection{Prescriptions derived from cocoercive operators}
\label{sec:22}

Let us first recall that, given a real Hilbert space $\GG$ and
$\beta\in\RPP$, an operator $Q\colon\GG\to\GG$ is
$\beta$-\emph{cocoercive} if
\begin{equation}
\label{e:coco}
(\forall x\in\GG)(\forall y\in\GG)\quad
\scal{x-y}{Qx-Qy}\geq\beta\|Qx-Qy\|^2,
\end{equation}
which means that $\beta Q$ is firmly nonexpansive 
\cite[Section~4.2]{Livre1}.
In the following proposition, a proximal point is constructed from
a finite family of nonlinear observations $(q_i)_{i\in\II}$ of
linear transformations of the function $\overline{x}\in\HH$, where
the nonlinearities are modeled via cocoercive operators. Item
\ref{p:1ii} below shows that this proximal point contains the same
information as the observations $(q_i)_{i\in\II}$.

\begin{proposition}
\label{p:1}
Let $(\GG_i)_{i\in\II}$ be a finite family of real Hilbert spaces
and let $\overline{x}\in\HH$. For every $i\in \II$, let
$\beta_i\in\RPP$, let $Q_i\colon\GG_i\to\GG_i$ be
$\beta_i$-cocoercive, let $L_i\colon\HH\to\GG_i$ be a nonzero
bounded linear operator, and define $q_i=Q_i(L_i\overline{x})$. 
Set 
\begin{equation}
\label{e:1a}
\beta=\dfrac{1}{\Sum_{i\in\II}{\dfrac{\|L_i\|^2}{\beta_i}}},
\quad p=\beta\sum_{i\in\II}L_i^*q_i,
\quad\text{and}\quad
F=\beta\sum_{i\in\II}L_i^*\circ Q_i\circ L_i.
\end{equation}
Then the following hold:
\begin{enumerate}
\item
\label{p:1i}
$p$ is the proximal point of $\overline{x}$ relative to $F$.
\item
\label{p:1ii}
$(\forall x\in\HH)$ $Fx=p$ $\Leftrightarrow$
$(\forall i\in\II)$ $Q_i(L_ix)=q_i$.
\end{enumerate}
\end{proposition}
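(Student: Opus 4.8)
The plan is to establish \ref{p:1i} first and then deduce \ref{p:1ii} from it. For \ref{p:1i}, the strategy is to show that $F$ is firmly nonexpansive and that $F\overline{x}=p$; since a proximal point relative to a firmly nonexpansive operator is by definition the image under that operator, this suffices. The equality $F\overline{x}=p$ is immediate from the definitions in \eqref{e:1a}: $F\overline{x}=\beta\sum_{i\in\II}L_i^*(Q_i(L_i\overline{x}))=\beta\sum_{i\in\II}L_i^*q_i=p$. So the crux is firm nonexpansiveness of $F$.

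For that, I would argue as follows. For each $i\in\II$, the operator $Q_i$ is $\beta_i$-cocoercive, so $\beta_i Q_i$ is firmly nonexpansive by \cite[Section~4.2]{Livre1}. Since $L_i$ is a nonzero bounded linear operator with adjoint $L_i^*$, the composition $L_i^*\circ(\beta_i Q_i)\circ L_i$ is $(1/\|L_i\|^2)$-cocoercive; equivalently, $L_i^*\circ Q_i\circ L_i$ is $(\beta_i/\|L_i\|^2)$-cocoercive — this is a standard consequence of the definition \eqref{e:coco} together with $\scal{L_i^*u}{x}=\scal{u}{L_ix}$ and $\|L_ix\|\le\|L_i\|\,\|x\|$. (One may alternatively invoke \cite[Corollary~4.13]{Livre1}, used already in the proof of Corollary~\ref{c:8}, applied to the firmly nonexpansive operator $\beta_i Q_i$ and the operator $L_i/\|L_i\|$.) Now a convex combination of $\alpha_i$-cocoercive operators, with weights summing to one, is cocoercive with constant equal to the minimum of the $\alpha_i$ — or, more to the point here, if $T_i$ is $\alpha_i$-cocoercive then $\sum_i\lambda_i T_i$ is $(1/\sum_i\lambda_i/\alpha_i)$-cocoercive when $\sum_i\lambda_i=1$; taking $\lambda_i$ proportional to $\|L_i\|^2/\beta_i$ and reading off the constant gives exactly that $F=\beta\sum_{i\in\II}L_i^*\circ Q_i\circ L_i$ is $1$-cocoercive, i.e. firmly nonexpansive, with $\beta$ as defined in \eqref{e:1a}. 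The bookkeeping of cocoercivity constants through the convex combination is the one place requiring a little care, but it is elementary since $\II$ is finite.

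For \ref{p:1ii}, the implication $(\Leftarrow)$ is the computation already noted: if $Q_i(L_ix)=q_i$ for every $i$, then $Fx=\beta\sum_{i\in\II}L_i^*(Q_i(L_ix))=\beta\sum_{i\in\II}L_i^*q_i=p$. For $(\Rightarrow)$, suppose $Fx=p$, i.e. $\beta\sum_{i\in\II}L_i^*(Q_i(L_ix)-q_i)=\mathsf{0}$. Since $\overline{x}$ also satisfies $F\overline{x}=p$ and $F$ is firmly nonexpansive with $F\overline{x}=F x$, the firm nonexpansiveness inequality \eqref{e:f} applied to $x$ and $\overline{x}$ forces $(\Id-F)x=(\Id-F)\overline{x}$, hence $x-\overline{x}=Fx-F\overline{x}=\mathsf{0}$ would be too strong — rather, the right route is to expand $\scal{x-\overline{x}}{Fx-F\overline{x}}=0$ using cocoercivity of each summand: writing $0=\scal{x-\overline{x}}{Fx-F\overline{x}}=\beta\sum_{i\in\II}\scal{L_ix-L_i\overline{x}}{Q_i(L_ix)-Q_i(L_i\overline{x})}\ge\beta\sum_{i\in\II}(\beta_i/\|L_i\|^2)\|Q_i(L_ix)-q_i\|^2\ge 0$, each term must vanish, so $Q_i(L_ix)=q_i$ for every $i\in\II$. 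The main obstacle, if any, is simply being careful that $F\overline{x}=Fx$ (both equal $p$) is what licenses using the equality case of cocoercivity; everything else is direct.
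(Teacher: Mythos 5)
Your proof is correct and follows essentially the same route as the paper: part \ref{p:1i} reduces to the firm nonexpansiveness of $F$ (which the paper obtains by citing \cite[Proposition~4.12]{Livre1}, the very combination result you re-derive), and part \ref{p:1ii} is the paper's exact equality-case-of-cocoercivity argument via $\scal{x-\overline{x}}{Fx-F\overline{x}}=0$. The only blemish is the constant in your final chain of inequalities: cocoercivity of $Q_i$ directly gives the lower bound $\beta_i\|Q_i(L_ix)-q_i\|^2$ rather than $(\beta_i/\|L_i\|^2)\|Q_i(L_ix)-q_i\|^2$, but since any strictly positive constant suffices to force each term of the nonnegative sum to vanish, the conclusion is unaffected.
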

\begin{proof}
\ref{p:1i}: It is clear that $p=F\overline{x}$. In addition, the 
firm nonexpansiveness of $F$ follows from
\cite[Proposition~4.12]{Livre1}. 

\ref{p:1ii}: 
Take $x\in\HH$ such that $Fx=p$. Then $Fx=F\overline{x}$
and \eqref{e:coco} yields
\begin{align}
0
&=\dfrac{\scal{Fx-F\overline{x}}{x-\overline{x}}}{\beta}
\nonumber\\
&=\sum_{i\in\II}\scal{Q_i(L_ix)
-Q_i(L_i\overline{x})}{L_ix-L_i\overline{x}}\nonumber\\
&\geq\sum_{i\in\II}\beta_i\|Q_i(L_ix)-Q_i(L_i\overline{x})\|^2
\nonumber\\
&=\sum_{i\in\II}\beta_i\|Q_i(L_ix)-q_i\|^2,
\end{align}
and therefore $(\forall i\in\II)$ $Q_i(L_ix)=q_i$. The reverse
implication is clear.
\end{proof}

Next, we consider the case when the observations $(q_i)_{i\in\II}$
in Proposition~\ref{p:1} are obtained through proximity operators. 

\begin{proposition}
\label{p:2}
Let $(\GG_i)_{i\in\II}$ be a finite family of real Hilbert spaces
and let $\overline{x}\in\HH$. For every $i\in\II$, let 
$g_i\in\Gamma_0(\GG_i)$, let $L_i\colon\HH\to\GG_i$ be a nonzero
bounded linear operator, and define 
$q_i=\prox_{g_i}(L_i\overline{x})$. Suppose that 
$\beta=1/(\sum_{i\in\II}{\|L_i\|^2})$, and set
$p=\beta\sum_{i\in\II}L_i^*q_i$ and
$F=\beta\sum_{i\in\II}L_i^*\circ\prox_{g_i}\circ L_i$.
Then the following hold:
\begin{enumerate}
\item
\label{p:2ii}
$p$ is the proximal point of $\overline{x}$ relative to $F$.
\item
\label{p:2iii}
$(\forall x\in\HH)$ $Fx=p$ $\Leftrightarrow$
$(\forall i\in\II)$ $\prox_{g_i}(L_ix)=q_i$.
\item
\label{p:2i}
If $\beta\geq 1$, then 
\begin{equation}
F=\beta\,\prox_f,\quad\text{where}\quad
f=\Bigg(\sum_{i\in\II}\bigg(g_i^*\infconv
\dfrac{\|\cdot\|_{\GG_i}^2}{2}\bigg)
\circ L_i\Bigg)^*-\dfrac{\|\cdot\|_{\HH}^2}{2}.
\end{equation}
\end{enumerate}
\end{proposition}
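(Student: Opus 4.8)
The plan is to obtain \ref{p:2ii} and \ref{p:2iii} for free from Proposition~\ref{p:1}. Indeed, for every $i\in\II$ the operator $\prox_{g_i}$ is firmly nonexpansive, hence $1$-cocoercive, so Proposition~\ref{p:1} applies with the spaces $\GG_i$, the operators $L_i$, the choice $Q_i=\prox_{g_i}$, and $\beta_i=1$; one only needs to check that the resulting constant $\beta=1/\sum_{i\in\II}\|L_i\|^2$, the point $p=\beta\sum_{i\in\II}L_i^*q_i$, and the operator $F=\beta\sum_{i\in\II}L_i^*\circ\prox_{g_i}\circ L_i$ coincide with those of the present statement. Then \ref{p:2ii} and \ref{p:2iii} are exactly Proposition~\ref{p:1}\ref{p:1i}--\ref{p:1ii}.

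For \ref{p:2i} the plan is to write $G=F/\beta=\sum_{i\in\II}L_i^*\circ\prox_{g_i}\circ L_i$ and to show $G=\prox_f$, whence $F=\beta\,\prox_f$. I would establish two facts about $G$. First, that $G$ is firmly nonexpansive: by \ref{p:2ii}, $F=\beta G$ is firmly nonexpansive, i.e.\ $G$ is $\beta$-cocoercive, and since $\beta\geq 1$, \eqref{e:coco} gives $\scal{x-y}{Gx-Gy}\geq\beta\|Gx-Gy\|^2\geq\|Gx-Gy\|^2$ for all $x,y\in\HH$, so $G$ is $1$-cocoercive, that is, firmly nonexpansive. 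Second, that $G$ is the gradient of a function in $\Gamma_0(\HH)$: using Moreau's decomposition $\prox_{g_i}=\Id-\prox_{g_i^*}$ and the differentiability of the Moreau envelope, one has $\prox_{g_i}=\nabla\theta_i$, where $\theta_i=g_i^*\infconv(\|\cdot\|_{\GG_i}^2/2)$ is a finite, convex, Fréchet differentiable function on $\GG_i$; the chain rule for the composition of a differentiable convex function with a nonzero bounded linear operator then gives $L_i^*\circ\prox_{g_i}\circ L_i=\nabla(\theta_i\circ L_i)$, and summing over the finite set $\II$ yields $G=\nabla\psi$ with $\psi=\sum_{i\in\II}(g_i^*\infconv(\|\cdot\|_{\GG_i}^2/2))\circ L_i$, a finite, continuous, convex function, hence $\psi\in\Gamma_0(\HH)$.

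To conclude, I would set $f=\psi^*-\|\cdot\|_{\HH}^2/2$, which is precisely the function displayed in the statement, and verify $\prox_f=G$. Since $f+\|\cdot\|_{\HH}^2/2=\psi^*\in\Gamma_0(\HH)$, Fenchel--Moreau gives $(f+\|\cdot\|_{\HH}^2/2)^*=\psi^{**}=\psi$, and the identity $\prox_f=\nabla((f+\|\cdot\|_{\HH}^2/2)^*)$ (a direct consequence of Moreau's decomposition applied to $f$) yields $\prox_f=\nabla\psi=G$, as required. Alternatively, one may invoke directly the characterization in \cite{Livre1} of proximity operators as the firmly nonexpansive operators that are gradients of functions in $\Gamma_0(\HH)$, which also supplies the same expression for $f$. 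The one point requiring care — the main obstacle — is to confirm that $f=\psi^*-\|\cdot\|_{\HH}^2/2$ genuinely belongs to $\Gamma_0(\HH)$: properness and lower semicontinuity are immediate, but convexity of $\psi^*-\|\cdot\|_{\HH}^2/2$ must be deduced from the nonexpansiveness of $\nabla\psi=G$ via the duality between $1$-Lipschitz continuity of a gradient and $1$-strong convexity of the conjugate, as recorded in \cite{Livre1}; this is exactly where the hypothesis $\beta\geq 1$ enters.
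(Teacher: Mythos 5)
Your treatment of \ref{p:2ii} and \ref{p:2iii} is exactly the paper's: both reduce to Proposition~\ref{p:1} with $Q_i=\prox_{g_i}$ and $\beta_i=1$, using the fact that proximity operators are firmly nonexpansive, hence $1$-cocoercive. For \ref{p:2i} you diverge: the paper simply cites \cite[Proposition~3.9]{Bord18}, whereas you supply a complete self-contained argument, and it is correct. Your two pillars both hold: $G=F/\beta$ is $\beta$-cocoercive because $F=\beta G$ is firmly nonexpansive, so $\beta\geq 1$ forces $G$ to be firmly nonexpansive; and $G=\nabla\psi$ with $\psi=\sum_{i\in\II}\big(g_i^*\infconv\frac{1}{2}\|\cdot\|_{\GG_i}^2\big)\circ L_i$ via Moreau's decomposition $\prox_{g_i}=\Id-\prox_{g_i^*}=\nabla\big(g_i^*\infconv\frac{1}{2}\|\cdot\|^2\big)$ and the chain rule. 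The concluding identity $\prox_f=\nabla\big((f+\frac{1}{2}\|\cdot\|^2)^*\big)$ together with $\psi^{**}=\psi$ then gives $\prox_f=\nabla\psi=G$, and you correctly isolate the one delicate point, namely that $f=\psi^*-\frac{1}{2}\|\cdot\|^2$ is convex, which follows from the duality between nonexpansiveness of $\nabla\psi$ and $1$-strong convexity of $\psi^*$ — precisely where $\beta\geq 1$ is consumed. What your route buys is transparency: the reader sees explicitly why the displayed formula for $f$ arises and why the hypothesis $\beta\geq 1$ is needed, at the cost of a longer argument that essentially reproves the cited result of \cite{Bord18}.
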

\begin{proof}
\ref{p:2ii}--\ref{p:2iii}:
Apply Proposition~\ref{p:1} with $(\forall i\in\II)$
$Q_i=\prox_{g_i}$ and $\beta_i=1$. 

\ref{p:2i}: This follows from \cite[Proposition~3.9]{Bord18}.
\end{proof}

\begin{example}[scalar observations]
\label{ex:1}
We specialize the setting of Proposition~\ref{p:2} by assuming 
that, for some $i\in\II$, $\GG_i=\RR$ and $L_i=\scal{\cdot}{a_i}$,
where $0\neq a_i\in\HH$. Let us denote by 
$\chi_i=\prox_{g_i}\scal{\overline{x}}{a_i}$ the resulting
observation. This scenario allows us to recover various
nonlinear observation processes used in the literature. 
\begin{enumerate}
\item
\label{ex:1a}
Set $g_i=\iota_D$, where $D$ is a nonempty closed interval in 
$\RR$ with $\underline{\delta}=\inf D\in\RLX$ and 
$\overline{\delta}=\sup D\in\RX$. Then we obtain the hard 
clipping process
\begin{equation}
\chi_i=\proj_D\scal{\overline{x}}{a_i}=
\begin{cases}
\overline{\delta},&\text{if}\;\:\scal{\overline{x}}{a_i}>
\overline{\delta};\\
\scal{\overline{x}}{a_i},&\text{if}\;\:
\scal{\overline{x}}{a_i}\in D;\\
\underline{\delta},&\text{if}\;\:
\scal{\overline{x}}{a_i}<\underline{\delta},
\end{cases}
\end{equation}
which shows up in several nonlinear data collection processes;
see for instance \cite{Avil17,Fouc17,Stud12,Tesh19}. It models the
inability of the sensors to record values above $\overline{\delta}$
and below $\underline{\delta}$.
\item
\label{ex:1z}
Let $\Omega$ be a nonempty closed interval of $\RR$ and let 
$\soft{\Omega}$ be the associated soft thresholder, i.e.,
\begin{equation}
\label{e:soft4}
\soft{\Omega}\colon\RR\to\RR\colon\xi\mapsto
\begin{cases}
\xi-\overline{\omega},&\text{if}\;\:\xi>\overline{\omega};\\
0,&\text{if}\;\:\xi\in\Omega;\\
\xi-\underline{\omega},&\text{if}\;\:\xi<\underline{\omega},
\end{cases}
\quad
\qquad\text{with}\quad
\begin{cases}
\overline{\omega}=\sup\Omega\\
\underline{\omega}=\inf\Omega.
\end{cases}
\end{equation}
Further, let $\psi\in\Gamma_0(\RR)$ be differentiable at
$0$ with $\psi'(0)=0$, and set $g_i=\psi+\sigma_\Omega$, where
$\sigma_\Omega$ is the support function of $\Omega$.
Then it follows from \cite[Proposition~3.6]{Siop07} that 
\begin{equation}
\label{e:2007}
\chi_i=\prox_\psi\big(\soft{\Omega}\scal{\overline{x}}{a_i}\big)=
\begin{cases}
\prox_\psi(\scal{\overline{x}}{a_i}-\overline{\omega}),
&\text{if}\;\:\scal{\overline{x}}{a_i}>\overline{\omega};\\
0,&\text{if}\;\:\scal{\overline{x}}{a_i}\in\Omega;\\
\prox_\psi(\scal{\overline{x}}{a_i}-\underline{\omega}),
&\text{if}\;\:\scal{\overline{x}}{a_i}<\underline{\omega}.
\end{cases}
\end{equation}
In particular, if $\Omega=[-\omega,\omega]$ and $\psi=0$, we obtain
the standard soft thresholding operation 
\begin{equation}
\label{e:donoho}
\chi_i=\sign(\scal{\overline{x}}{a_i})
\max\{\abscal{\overline{x}}{a_i}-\omega,0\}
\end{equation}
of \cite{Dono94}. On the other hand, if 
$\Omega=\left]\minf,\omega\right]$ and $\psi=0$, we obtain
a nonlinear sensor model from \cite{Krau17}.
\item
\label{ex:1m}
In \ref{ex:1z} suppose that $\psi=\iota_D$, where $D$ is as in
\ref{ex:1a} and contains $0$ in its interior. Then \eqref{e:2007}
becomes
\begin{equation}
\chi_i=
\begin{cases}
\overline{\delta},&\text{if}\;\:\scal{\overline{x}}{a_i}
\geq\overline{\delta}+\overline{\omega};\\
\scal{\overline{x}}{a_i}-\overline{\omega},
&\text{if}\;\:\overline{\omega}<\scal{\overline{x}}{a_i}
<\overline{\delta}+\overline{\omega};\\
0,&\text{if}\;\:\scal{\overline{x}}{a_i}\in\Omega;\\
\scal{\overline{x}}{a_i}-\underline{\omega},
&\text{if}\;\:\underline{\delta}+\underline{\omega}
<\scal{\overline{x}}{a_i}<\underline{\omega};\\
\underline{\delta},&\text{if}\;\:
\scal{\overline{x}}{a_i}\leq\underline{\delta}+\underline{\omega}.
\end{cases}
\end{equation}
This operation combines hard clipping and soft thresholding.
\item
\label{ex:1b}
Set 
\begin{equation}
g_i\colon\xi\mapsto
\begin{cases}
\dfrac{(1+\xi)\ln(1+\xi)+(1-\xi)\ln(1-\xi)-\xi^2}{2},
&\text{if}\;\:|\xi|<1;\\
\ln(2)-1/2,&\text{if}\;\:|\xi|=1;\\
\pinf,&\text{if}\;\:|\xi|>1.
\end{cases}
\end{equation}
Then it follows from \cite[Example~2.12]{Svva20} that 
$\chi_i=\tanh(\scal{\overline{x}}{a_i})$.
This soft clipping model is used in \cite{Avil17,Ende12}.
\item
\label{ex:1c}
Set 
\begin{equation}
g_i\colon\xi\mapsto
\begin{cases}
-\dfrac{2}{\pi}\ln\Big(\cos\Big(\Frac{\pi\xi}{2}\Big)\Big)-
\dfrac{\xi^2}{2},&\text{if}\;\:|\xi|<1;\\
\pinf,&\text{if}\;\:|\xi|\geq 1.
\end{cases}
\end{equation}
Then it follows from \cite[Example~2.11]{Svva20} that 
$\chi_i=(2/\pi)\arctan(\scal{\overline{x}}{a_i})$.
This soft clipping model appears in \cite{Avil17}.
\item
\label{ex:1d}
Set
\begin{align}
g_i\colon\xi\mapsto
\begin{cases}
-|\xi|-\ln(1-|\xi|)-{\xi^2}/{2},&\text{if}\;\:|\xi|<1;\\
\pinf,&\text{if}\;\:|\xi|\geq 1.
\end{cases}
\end{align}
Then it follows from \cite[Example~2.15]{Svva20} that 
$\chi_i={\scal{\overline{x}}{a_i}}/(1+\abscal{\overline{x}}{a_i})$.
This soft clipping model is found in \cite{Ende12,Marm20}.
\item
\label{ex:1e}
Set
\begin{equation}
\label{e:g6}
g_i\colon\xi\mapsto
\begin{cases}
|\xi|+(1-|\xi|)\ln\big|1-|\xi|\big|-{\xi^2}/{2},
&\text{if}\;\:|\xi|<1;\\
1/2,&\text{if}\;\:|\xi|=1;\\
\pinf,&\text{if}\;\:|\xi|>1.
\end{cases}
\end{equation}
For every 
$\xi\in\left]-1,1\right[=\dom g_i'=\ran\prox_{g_i}$,
we have $\xi+g_i'(\xi)=-\sign(\xi)\ln(1-|\xi|)$. Hence,
\begin{equation}
\label{e:d6}
\big(\Id+g_i'\big)^{-1}
=\prox_{g_i}\colon\xi\mapsto\sign(\xi)\big(1-\exp(-|\xi|)\big)
\end{equation}
and, therefore,
$\chi_i=\sign(\scal{\overline{x}}{a_i})
(1-\exp(-|\scal{\overline{x}}{a_i}|))$.
This distortion model is found in \cite[Section~10.6.3]{Tarr19}.
\item
\label{ex:1f}
Let $\eta_i\in\RPP$ and set 
\begin{equation}
g_i\colon\xi\mapsto\eta_i\xi+
\begin{cases}
\xi\ln(\xi)+(1-\xi)\ln(1-\xi)-\xi^2/2,&
\text{if}\;\:\xi\in\zeroun;\\
0,&\text{if}\;\:\xi=0;\\
-1/2,&\text{if}\;\:\xi=1;\\
\pinf,&\text{if}\;\:\xi\in\RR\smallsetminus[0,1].
\end{cases}
\end{equation}
Proceeding as in \ref{ex:1e}, we obtain
\begin{equation}
\label{e:82}
\chi_i=\frac{1}{1+\exp(\eta_i-\scal{\overline{x}}{a_i})},
\end{equation}
which is an encoding scheme used in \cite{Kond15}.
\end{enumerate}
\end{example}

\begin{example}
\label{ex:3}
In Proposition~\ref{p:2} suppose that, for some $i\in\II$,
$g_i=\phi_i\circ d_{D_i}$, where $\phi_i\in\Gamma_0(\RR)$ is even
with $\phi_i(0)=0$, and $D_i\subset\GG_i$ is nonempty, closed, and
convex. Then it follows from \cite[Proposition~2.1]{Nmtm09} that
$q_i$ is the nonlinear observation defined as follows:
\begin{enumerate}
\item
\label{ex:3i}
Suppose that $\phi_i=\iota_{\{0\}}$. Then 
\begin{equation}
\label{e:1w}
q_i=\proj_{D_i}(L_i\overline{x})
\end{equation}
captures several applications. Thus, if $\HH=\RR^N$ and
$D_i=\menge{(\xi_i)_{1\leq i\leq N}\in\RR^N}
{\xi_1\leq\cdots\leq\xi_N}$, then $q_i$ is the
best isotonic approximation to $L_i\overline{x}$ \cite{Dair20}.
On the other hand, if $D_i$ is the closed ball
with center $0$ and radius $\rho_i\in\RPP$, then \eqref{e:1w}
reduces to the hard saturation process
\begin{equation}
\label{e:1h}
q_i= 
\begin{cases}
\dfrac{\rho_i}{\|L_i\overline{x}\|}L_i\overline{x},
&\text{if}\;\:\|L_i\overline{x}\|>\rho_i;\\
L_i\overline{x},&\text{if}\;\:\|L_i\overline{x}\|\leq\rho_i,
\end{cases}
\end{equation}
which can be viewed as an infinite dimensional version of 
Example~\ref{ex:1}\ref{ex:1a}.
\item
\label{ex:3ii}
Suppose that $\phi_i\neq\iota_{\{0\}}$ and set 
$\rho_i=\max\partial\phi_i(0)$. Then 
\begin{equation}
\label{e:1b}
q_i= 
\begin{cases}
L_i\overline{x}+\Frac{\prox_{\phi_i^*}d_{D_i}
(L_i\overline{x})}{d_{D_i}(L_i\overline{x})}
\big(\proj_{D_i}(L_i\overline{x})-L_i\overline{x}\big),
&\text{if}\;\:d_{D_i}(L_i\overline{x})>\rho_i;\\
\proj_{D_i}(L_i\overline{x}),&\text{if}\;\:
d_{D_i}(L_i\overline{x})\leq\rho_i.
\end{cases}
\end{equation}
In particular, assume that $D_i=\{0\}$. Then
\eqref{e:1b} reduces to the abstract soft thresholding process
\begin{equation}
\label{e:1c}
q_i= 
\begin{cases}
L_i\overline{x}-\Frac{\prox_{\phi_i^*}\|L_i\overline{x}\|}
{\|L_i\overline{x}\|}\;L_i\overline{x},
&\text{if}\;\:\|L_i\overline{x}\|>\rho_i;\\
0,&\text{if}\;\:\|L_i\overline{x}\|\leq\rho_i,
\end{cases}
\end{equation}
which cannot record inputs with norm below a certain value.
Let us further specialize to the setting in which
$\phi_i=\rho_i|\cdot|$ with $\rho_i\in\RPP$. Then
$\phi_i^*=\iota_{[-\rho_i,\rho_i]}$,
$\partial\phi_i(0)=[-\rho_i,\rho_i]$, and \eqref{e:1c} becomes 
\begin{equation}
\label{e:1d}
q_i= 
\begin{cases}
\bigg(1-\Frac{\rho_i}{\|L_i\overline{x}\|}\bigg)
L_i\overline{x},
&\text{if}\;\:\|L_i\overline{x}\|>\rho_i;\\
0,&\text{if}\;\:\|L_i\overline{x}\|\leq\rho_i,
\end{cases}
\end{equation}
which can be viewed as an infinite dimensional version of
\eqref{e:donoho}.
\end{enumerate}
\end{example}

\subsection{Prescriptions derived from non-cocoercive operators}
\label{sec:23}

Here, we exemplify observation processes which are not cocoercive,
and possibly not even continuous, but that can still be represented
by proximal points relative to some firmly nonexpansive operator,
as required in Problem~\ref{prob:1}. The results in this section
constructively provide the proximal points and phrase the
evaluation of each firmly nonexpansive operator in terms of the
nonlinearity in the observation process.

\begin{example}
\label{ex:17}
In the spirit of the shrinkage ideas of Corollary~\ref{c:8} and
Example~\ref{ex:5}, a prescription involving more general
transformations $(\varrho_i)_{i\in\II}$ can be used to derive an
equivalent prescribed proximal point. Let us adopt the 
setting of Corollary~\ref{c:8}, except that
$(\varrho_i)_{i\in\II}$ are now arbitrary operators from $\RR$ to
$\RR$ such that, for some $\delta\in\RPP$, 
$\sup_{i\in\II}|\varrho_i|\leq\delta|\cdot|$. Since
\begin{equation}
\sum_{i\in\II}\big|\varrho_i(\scal{\overline{x}}{e_i})\big|^2
\leq\delta^2\sum_{i\in\II}\abscal{\overline{x}}{e_i}^2=
\delta^2\|\overline{x}\|^2<\pinf,
\end{equation}
the prescription 
$q=\sum_{i\in\II}\varrho_i(\scal{\overline{x}}{e_i})e_i$ is well
defined. While $q$ is not a proximal point in general, an
equivalent proximal point $p$ can be constructed from it in certain
instances. To illustrate this process, let us first compute 
$(\forall i\in\II)$ 
$\chi_i=\scal{q}{e_i}=\varrho_i(\scal{\overline{x}}{e_i})$.
In both examples to follow, for every $i\in\II$, we construct an
operator $\sigma_i\colon\RR\to\RR$ such that
$\varphi_i=\sigma_i\circ\varrho_i$ is firmly nonexpansive,
$\varphi_i(0)=0$, and no information is lost when $\sigma_i$
is applied to the prescription 
$\chi_i=\varrho_i(\scal{\overline{x}}{e_i})$ in the sense that
\begin{equation}
\label{e:rt1}
(\forall\xi\in\RR)\quad 
\big[\,\chi_i=\varrho_i(\xi)\quad\Leftrightarrow\quad
\sigma_i(\chi_i)=\sigma_i\big(\varrho_i(\xi)\big)=\varphi_i(\xi)
\,\big].
\end{equation}
Using Corollary~\ref{c:8} with the firmly nonexpansive
operators $(\varphi_i)_{i\in\II}$, this implies that
$p=\sum_{i\in\II}\sigma_i(\chi_i)e_i$ is a proximal point of
$\overline{x}$.
\begin{enumerate}
\item
\label{ex:17i}
Let $i\in\II$, let $\omega_i\in\RPP$, and consider the
non-Lipschitzian sampling operator \cite{Abra98,Taov00} 
\begin{equation}
\label{e:22-1}
\varrho_i\colon\xi\mapsto
\begin{cases}
\sign(\xi)\sqrt{\xi^2-\omega_i^2},
&\text{if}\;\:|\xi|>\omega_i;\\
0,&\text{if}\;\:|\xi|\leq\omega_i.
\end{cases} 
\end{equation}
It is straightforward to verify that \eqref{e:rt1} holds with
\begin{equation}
\sigma_i\colon\xi\mapsto\sign(\xi)
\left(\sqrt{\xi^2+\omega_i^2}-\omega_i\right),
\end{equation}
in which case $\varphi_i=\sigma_i\circ\varrho_i$ is the
soft thresholder on $[-\omega_i,\omega_i]$ of \eqref{e:soft4}.
\item
\label{ex:17ii}
Let $i\in\II$, let $\omega_i\in\RPP$, and consider the
discontinuous sampling operator \cite{Taov00}
\begin{equation}
\varrho_i=\hard{[-\omega_i,\omega_i]}\colon\xi\mapsto
\begin{cases}
\xi,&\text{if}\;\:|\xi|>\omega_i;\\
0,&\text{if}\;\:|\xi|\leq\omega_i,
\end{cases}
\end{equation}
which is also known as the hard thresholder on
$[-\omega_i,\omega_i]$. This operator is used as a sensing model in
\cite{Boch13} and as a compression model in \cite{Teml98}. Then
\eqref{e:rt1} is satisfied with
\begin{equation}
\sigma_i\colon\xi\mapsto\xi-\omega_i\sign(\xi),
\end{equation}
in which case $\varphi_i=\sigma_i\circ\varrho_i$ turns out to be
the soft thresholder on
$[-\omega_i,\omega_i]$ of \eqref{e:soft4}.
\end{enumerate}
\end{example}

Next, we revisit Proposition~\ref{p:7} by relaxing the firm
nonexpansiveness of the observation operators and constructing an
equivalent proximal point via some transformation. This equivalence
is expressed in \ref{p:6i} below.

\begin{proposition}
\label{p:6}
Let $(\HS_i)_{i\in\II}$ be an at most countable family of real
Hilbert spaces, let $\HH=\bigoplus_{i\in\II}\HS_i$, let
$\overline{x}\in\HH$, and let 
$(\overline{\mathsf{x}}_i)_{i\in\II}$ be its decomposition, i.e.,
$(\forall i\in\II)$ $\overline{\mathsf{x}}_i\in\HS_i$. In addition,
for every $i\in\II$, let $\mathsf{Q}_i\colon\HS_i\to\HS_i$ and let
$\mathsf{q}_i=\mathsf{Q}_i\overline{\mathsf{x}}_i$. Suppose that
there exist operators $(\mathsf{S}_i)_{i\in\II}$ from $\HS_i$ to
$\HS_i$ such that the operators $(\mathsf{F}_i)_{i\in\II}=
(\mathsf{S}_i\circ\mathsf{Q}_i)_{i\in\II}$ satisfy the following:
\begin{enumerate}
\item
\label{p:6iii}
The operators $(\mathsf{F}_i)_{i\in\II}$ are firmly nonexpansive.
\item 
\label{p:6ii}
If $\II$ is infinite, there exists $(\mathsf{z}_i)_{i\in\II}\in\HH$
such that
$\sum_{i\in\II}\|\mathsf{F}_i\mathsf{z}_i-\mathsf{z}_i\|^2<\pinf$.
\item
\label{p:6i}
$(\forall i\in\II)(\forall\,\mathsf{x}_i\in\HS_i)$ 
$\big[\,\mathsf{F}_i\mathsf{x}_i=
\mathsf{S}_i\mathsf{q}_i$ $\Leftrightarrow$ 
$\mathsf{Q}_i\mathsf{x}_i=\mathsf{q}_i\,\big]$.
\end{enumerate}
Then $p=(\mathsf{S}_i\mathsf{q}_i)_{i\in\II}$ is the proximal point
of $\overline{x}$ relative to
$F\colon\HH\to\HH\colon(\mathsf{x}_i)_{i\in\II}\mapsto
(\mathsf{F}_i\mathsf{x}_i)_{i\in\II}$.
\end{proposition}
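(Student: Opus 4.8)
The plan is to reduce Proposition~\ref{p:6} directly to Proposition~\ref{p:7}, using hypotheses \ref{p:6iii} and \ref{p:6ii} to verify the latter's assumptions and hypothesis \ref{p:6i} to translate the conclusion back to the operators $(\mathsf{Q}_i)_{i\in\II}$.

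First I would observe that hypotheses \ref{p:6iii} and \ref{p:6ii} are precisely the assumptions of Proposition~\ref{p:7} applied to the family $(\mathsf{F}_i)_{i\in\II}$: each $\mathsf{F}_i\colon\HS_i\to\HS_i$ is firmly nonexpansive, and when $\II$ is infinite the square-summability condition $\sum_{i\in\II}\|\mathsf{F}_i\mathsf{z}_i-\mathsf{z}_i\|^2<\pinf$ holds for the prescribed point $(\mathsf{z}_i)_{i\in\II}\in\HH$. Hence Proposition~\ref{p:7} applies verbatim: setting $F\colon\HH\to\HH\colon(\mathsf{x}_i)_{i\in\II}\mapsto(\mathsf{F}_i\mathsf{x}_i)_{i\in\II}$, we obtain that $F$ is a well-defined firmly nonexpansive operator and that $(\mathsf{F}_i\overline{\mathsf{x}}_i)_{i\in\II}\in\HH$ is the proximal point of $\overline{x}$ relative to $F$, i.e.\ $F\overline{x}=(\mathsf{F}_i\overline{\mathsf{x}}_i)_{i\in\II}$.

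Next I would identify this proximal point with $p=(\mathsf{S}_i\mathsf{q}_i)_{i\in\II}$. For each $i\in\II$, since $\mathsf{F}_i=\mathsf{S}_i\circ\mathsf{Q}_i$ and $\mathsf{q}_i=\mathsf{Q}_i\overline{\mathsf{x}}_i$, we have $\mathsf{F}_i\overline{\mathsf{x}}_i=\mathsf{S}_i(\mathsf{Q}_i\overline{\mathsf{x}}_i)=\mathsf{S}_i\mathsf{q}_i$; therefore $F\overline{x}=(\mathsf{S}_i\mathsf{q}_i)_{i\in\II}=p$. In particular $p\in\HH$ (which also follows from $p=F\overline{x}$ and the well-definedness of $F$), and $p$ is the proximal point of $\overline{x}$ relative to $F$, which is exactly the assertion. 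Hypothesis \ref{p:6i} is not needed for this statement per se; it is the companion equivalence (the analogue of Proposition~\ref{p:1}\ref{p:1ii} and Proposition~\ref{p:2}\ref{p:2iii}) guaranteeing that the constraint $Fx=p$ recovers the original observations $\mathsf{Q}_ix=\mathsf{q}_i$, and it holds by construction; I would mention it only if the authors want the full equivalence recorded here.

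There is essentially no obstacle: the content is entirely front-loaded into Proposition~\ref{p:7}, and the only thing to check is that the composition structure $\mathsf{F}_i=\mathsf{S}_i\circ\mathsf{Q}_i$ makes $\mathsf{F}_i\overline{\mathsf{x}}_i$ coincide with $\mathsf{S}_i\mathsf{q}_i$, which is immediate from $\mathsf{q}_i=\mathsf{Q}_i\overline{\mathsf{x}}_i$. The mild subtlety worth a sentence is that $(\mathsf{Q}_i)_{i\in\II}$ themselves are \emph{not} assumed firmly nonexpansive (nor even continuous), so one cannot form a direct-sum operator out of them; the role of the $(\mathsf{S}_i)_{i\in\II}$ is exactly to repair this, and \ref{p:6i} ensures no information is lost in doing so, mirroring the mechanism of Example~\ref{ex:17}. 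The proof is therefore a two-line invocation of Proposition~\ref{p:7}.
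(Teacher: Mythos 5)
Your proposal is correct and is exactly the paper's argument: the authors' entire proof reads ``This follows from Proposition~\ref{p:7},'' and your write-up simply fills in the (routine) verification that \ref{p:6iii} and \ref{p:6ii} are the hypotheses of Proposition~\ref{p:7} and that $\mathsf{F}_i\overline{\mathsf{x}}_i=\mathsf{S}_i\mathsf{q}_i$ identifies the resulting proximal point with $p$. Your observation that \ref{p:6i} is not used in establishing the stated conclusion, but rather records the information-preserving equivalence, also matches the paper's framing.
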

\begin{proof}
This follows from Proposition~\ref{p:7}.
\end{proof}

The following result illustrates the process described in
Proposition~\ref{p:6}, through a generalization of the
discontinuous hard thresholding operator of
Example~\ref{ex:17}\ref{ex:17ii}, which corresponds to the case
when $\HS_i=\RR$ and $\mathsf{C}_i=\{\mathsf{0}\}$ in \eqref{e:g1}
below.

\begin{proposition}
\label{p:g}
Let $(\HS_i)_{i\in\II}$ be an at most countable family of real
Hilbert spaces, let $\HH=\bigoplus_{i\in\II}\HS_i$, let
$\overline{x}\in\HH$, and let 
$(\overline{\mathsf{x}}_i)_{i\in\II}$ be its decomposition.
For every $i\in\II$, let $\omega_i\in\RPP$, let $\mathsf{C}_i$ be 
a nonempty closed convex subset of $\HS_i$, set 
\begin{equation}
\label{e:g1}
\mathsf{Q}_i\colon\HS_i\to\HS_i\colon\mathsf{x}_i\mapsto
\begin{cases}
\mathsf{x}_i,&\text{if}\;\:d_{\mathsf{C}_i}(\mathsf{x}_i)>
\omega_i;\\
\proj_{\mathsf{C}_i}\mathsf{x}_i,&\text{if}\;\:
d_{\mathsf{C}_i}(\mathsf{x}_i)\leq\omega_i,
\end{cases}
\end{equation}
and let $\mathsf{q}_i=\mathsf{Q}_i\overline{\mathsf{x}}_i$ be the
associated prescription. If $\II$ is infinite, suppose that
$(\forall i\in\II)$ $\mathsf{0}\in\mathsf{C}_i$. Further, for
every $i\in\II$, set
\begin{equation}
\label{e:g3}
\mathsf{S}_i\colon\HS_i\to\HS_i\colon \mathsf{x}_i\mapsto
\begin{cases}
\mathsf{x}_i+\dfrac{\omega_i}{d_{\mathsf{C}_i}(\mathsf{x}_i)}
(\proj_{\mathsf{C}_i}\mathsf{x}_i-\mathsf{x}_i),&\text{if}\;\:
\mathsf{x}_i\not\in\mathsf{C}_i;\\
\mathsf{x}_i,&\text{if}\;\:\mathsf{x}_i\in\mathsf{C}_i
\end{cases}
\qquad\text{and}\quad 
\begin{cases}
\mathsf{F}_i=\mathsf{S}_i\circ\mathsf{Q}_i\\
\mathsf{p}_i=\mathsf{S}_i\mathsf{q}_i.
\end{cases}
\end{equation}
Finally, set $p=(\mathsf{p}_i)_{i\in\II}$ and
$f\colon\HH\to\RX\colon(\mathsf{x}_i)_{i\in\II}\mapsto
\sum_{i\in\II}\omega_id_{\mathsf{C}_i}(\mathsf{x}_i)$. Then the
following hold:
\begin{enumerate}
\item
\label{p:gi}
For every $i\in\II$,
$\mathsf{F}_i=\prox_{\omega_i d_{\mathsf{C}_i}}$.
\item
\label{p:gii}
$p$ is the proximal point of $\overline{x}$ relative to $f$.
\item
\label{p:giii}
Let $x=(\mathsf{x}_i)_{i\in\II}\in\HH$. Then 
$\big[\,(\forall i\in\II)$ $\mathsf{Q}_i\mathsf{x}_i=
\mathsf{q}_i\,\big]$ $\Leftrightarrow$ $\prox_fx=p$.
\end{enumerate}
\end{proposition}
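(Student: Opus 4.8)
The plan is to establish \ref{p:gi} first, since \ref{p:gii} and \ref{p:giii} then follow readily by combining it with the earlier machinery (Corollary~\ref{c:7} and Proposition~\ref{p:6}). For \ref{p:gi}, fix $i\in\II$ and write $D=\mathsf{C}_i$, $\omega=\omega_i$, $\varphi=\omega d_D$ for brevity. I would show that the operator $\mathsf{F}_i=\mathsf{S}_i\circ\mathsf{Q}_i$ described by the two-case formulas in \eqref{e:g1} and \eqref{e:g3} coincides with $\prox_\varphi$. The natural route is to invoke the known closed form of the proximity operator of $\omega d_D$: by \cite[Example~24.20]{Livre1} (or the Moreau-envelope computation in \cite[Proposition~2.1]{Nmtm09} used elsewhere in the paper), one has for $\mathsf{x}\in\HS_i$ that $\prox_{\omega d_D}\mathsf{x}=\mathsf{x}+\min\{\omega,d_D(\mathsf{x})\}/\!\max\{d_D(\mathsf{x}),\text{(nonzero)}\}\,(\proj_D\mathsf{x}-\mathsf{x})$ when $\mathsf{x}\notin D$, and $\prox_{\omega d_D}\mathsf{x}=\mathsf{x}$ when $\mathsf{x}\in D$. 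I would then simply verify that composing $\mathsf{S}_i$ after $\mathsf{Q}_i$ reproduces exactly this: if $d_D(\mathsf{x})>\omega$ then $\mathsf{Q}_i\mathsf{x}=\mathsf{x}$ and $\mathsf{S}_i$ moves it a distance $\omega$ toward $D$ along the segment to $\proj_D\mathsf{x}$; if $0<d_D(\mathsf{x})\le\omega$ then $\mathsf{Q}_i\mathsf{x}=\proj_D\mathsf{x}\in D$, so $\mathsf{S}_i$ leaves it fixed, giving $\proj_D\mathsf{x}$; and if $\mathsf{x}\in D$ both operators act as the identity. A small point to check is that $\proj_D(\proj_D\mathsf{x})=\proj_D\mathsf{x}$, so the composite formula is internally consistent, and that $d_D(\mathsf{x})\le\omega$ is exactly the regime where $\min\{\omega,d_D(\mathsf{x})\}=d_D(\mathsf{x})$, matching the pure-projection branch of $\prox_{\omega d_D}$.

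For \ref{p:gii}, I would apply Corollary~\ref{c:7} with $\mathsf{f}_i=\omega_i d_{\mathsf{C}_i}$. Each such $\mathsf{f}_i$ lies in $\Gamma_0(\HS_i)$ (distance functions to nonempty closed convex sets are continuous and convex), and when $\II$ is infinite the hypothesis $\mathsf{0}\in\mathsf{C}_i$ gives $\mathsf{f}_i(\mathsf{0})=0=\min\mathsf{f}_i$, so the nonnegativity-with-zero-at-origin condition of Corollary~\ref{c:7} is met. By part~\ref{p:gi}, $\prox_{\mathsf{f}_i}=\mathsf{F}_i$, hence $p=(\mathsf{F}_i\overline{\mathsf{x}}_i)_{i\in\II}=(\mathsf{S}_i\mathsf{q}_i)_{i\in\II}$ (using $\mathsf{p}_i=\mathsf{S}_i\mathsf{q}_i=\mathsf{S}_i\mathsf{Q}_i\overline{\mathsf{x}}_i=\mathsf{F}_i\overline{\mathsf{x}}_i$), and Corollary~\ref{c:7} identifies this as $\prox_f\overline{x}$ with $f=\sum_{i\in\II}\omega_i d_{\mathsf{C}_i}(\cdot)$, which is exactly the $f$ in the statement.

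For \ref{p:giii}, the cleanest approach is to invoke Proposition~\ref{p:6} with $\mathsf{Q}_i$ and $\mathsf{S}_i$ as given: condition \ref{p:6iii} is part~\ref{p:gi} of the present proposition, condition \ref{p:6ii} holds with $\mathsf{z}_i=\mathsf{0}$ since $\mathsf{F}_i\mathsf{0}=\prox_{\omega_i d_{\mathsf{C}_i}}\mathsf{0}=\mathsf{0}$ when $\mathsf{0}\in\mathsf{C}_i$, so the equivalence \ref{p:6i} is the only nontrivial hypothesis to verify — namely that for all $\mathsf{x}_i\in\HS_i$, $\mathsf{F}_i\mathsf{x}_i=\mathsf{S}_i\mathsf{q}_i\iff\mathsf{Q}_i\mathsf{x}_i=\mathsf{q}_i$. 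The reverse implication is immediate by applying $\mathsf{S}_i$. For the forward implication, suppose $\mathsf{S}_i(\mathsf{Q}_i\mathsf{x}_i)=\mathsf{S}_i\mathsf{q}_i$; one observes that $\mathsf{S}_i$ restricted to the range of $\mathsf{Q}_i$ is injective, because $\ran\mathsf{Q}_i$ consists of points that are either in $\mathsf{C}_i$ (where $\mathsf{S}_i$ is the identity) or at distance exactly $>\omega_i$ from $\mathsf{C}_i$... actually the key is that $\mathsf{S}_i=\prox_{\omega_i d_{\mathsf{C}_i}}\circ\,(\text{something})$ — more directly, $\mathsf{S}_i$ is a homeomorphism onto its image or at least injective on $\ran\mathsf{Q}_i$. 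I would verify injectivity of $\mathsf{S}_i$ on $\ran\mathsf{Q}_i$ by noting that $\mathsf{S}_i$ maps $\mathsf{C}_i$ identically onto $\mathsf{C}_i$ and maps $\{\mathsf{x}:d_{\mathsf{C}_i}(\mathsf{x})>\omega_i\}$ bijectively onto $\{\mathsf{y}:\mathsf{y}\notin\mathsf{C}_i\}\setminus\mathsf{C}_i$ with the two images disjoint (since $\mathsf{S}_i$ shrinks the distance by exactly $\omega_i>0$, a point outside $\mathsf{C}_i$ with $d_{\mathsf{C}_i}>\omega_i$ maps to a point with positive distance, not into $\mathsf{C}_i$). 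Then $\mathsf{S}_i(\mathsf{Q}_i\mathsf{x}_i)=\mathsf{S}_i\mathsf{q}_i$ with both arguments in $\ran\mathsf{Q}_i$ forces $\mathsf{Q}_i\mathsf{x}_i=\mathsf{q}_i$, giving \ref{p:6i}. Finally, Proposition~\ref{p:6} yields that $p$ is the proximal point of $\overline{x}$ relative to $F$; combined with part~\ref{p:gii} (which identifies that same $F$ with $\prox_f$) and the equivalence \ref{p:6i} transported to the product space, we get $\big[(\forall i\in\II)\ \mathsf{Q}_i\mathsf{x}_i=\mathsf{q}_i\big]\iff Fx=p\iff\prox_f x=p$, as claimed. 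The main obstacle I anticipate is the careful verification of the injectivity of $\mathsf{S}_i$ on $\ran\mathsf{Q}_i$ needed for \ref{p:6i}; everything else is assembly of results already in the excerpt.
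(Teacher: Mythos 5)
Your proposal is correct and follows essentially the same route as the paper: part \ref{p:gi} via the closed form of $\prox_{\omega_i d_{\mathsf{C}_i}}$ (the paper cites \cite[Example~24.28]{Livre1} after computing the composite formula with \cite[Proposition~3.21]{Livre1}), part \ref{p:gii} via Corollary~\ref{c:7} with $\mathsf{z}_i=\mathsf{0}$, and part \ref{p:giii} by reducing to the componentwise equivalence $\mathsf{F}_i\mathsf{x}_i=\mathsf{S}_i\mathsf{q}_i\Leftrightarrow\mathsf{Q}_i\mathsf{x}_i=\mathsf{q}_i$. The injectivity of $\mathsf{S}_i$ on $\ran\mathsf{Q}_i$ that you flag as the delicate point is exactly what the paper establishes, phrased as a case analysis on whether $\mathsf{F}_i\mathsf{x}_i$ lies in $\mathsf{C}_i$ (disjointness of the two image pieces) followed by the observation that $\mathsf{x}_i$ and $\overline{\mathsf{x}}_i$ share the projection $\mathsf{r}_i=\proj_{\mathsf{C}_i}\mathsf{p}_i$ and taking norms in $\mathsf{p}_i-\mathsf{r}_i=\bigl(1-\omega_i/\|\mathsf{x}_i-\mathsf{r}_i\|\bigr)(\mathsf{x}_i-\mathsf{r}_i)$, so your sketch closes the gap by the same computation.
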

\begin{proof}
We derive from \eqref{e:g1}, \eqref{e:g3}, and 
\cite[Proposition~3.21]{Livre1} that
\begin{equation}
\label{e:60}
(\forall i\in\II)(\forall\mathsf{x}_i\in\HS_i)\quad
\mathsf{F}_i\mathsf{x}_i=
\begin{cases}
\proj_{\mathsf{C}_i}\mathsf{x}_i+
\bigg(1-\dfrac{\omega_i}{d_{\mathsf{C}_i}(\mathsf{x}_i)}\bigg)
\big(\mathsf{x}_i-\proj_{\mathsf{C}_i}\mathsf{x}_i\big)
\notin\mathsf{C}_i,&\text{if}\;\:
d_{\mathsf{C}_i}(\mathsf{x}_i)>\omega_i;\\
\proj_{\mathsf{C}_i}\mathsf{x}_i\in\mathsf{C}_i,&\text{if}\;\:
d_{\mathsf{C}_i}(\mathsf{x}_i)\leq\omega_i.
\end{cases}
\end{equation}

\ref{p:gi}: This is a consequence of \eqref{e:60} and
\cite[Example~24.28]{Livre1}.

\ref{p:gii}: If $\II$ is infinite, $(\forall i\in\II)$
$\mathsf{0}\in\mathsf{C}_i$ $\Rightarrow$
$d_{\mathsf{C}_i}(\mathsf{0})=0$ $\Rightarrow$
$\mathsf{F}_i(\mathsf{0})=\mathsf{0}$ by \eqref{e:60}. In turn, the
claim follows from Corollary~\ref{c:7} and \ref{p:gi}.

\ref{p:giii}: 
We first note that Corollary~\ref{c:7} and \ref{p:gi} imply that
\begin{equation}
\label{e:66}
(\mathsf{F}_i\mathsf{x}_i)_{i\in\II}=
(\prox_{\omega_id_{\mathsf{C}_i}}\mathsf{x}_i)_{i\in\II}=
\prox_f x.
\end{equation}
Now, suppose that $(\forall i\in\II)$ 
$\mathsf{Q}_i\mathsf{x}_i=\mathsf{q}_i$. Then $(\forall i\in\II)$ 
$\mathsf{F}_i\mathsf{x}_i=\mathsf{S}_i(\mathsf{Q}_i\mathsf{x}_i)
=\mathsf{S}_i\mathsf{q}_i=\mathsf{p}_i$. In turn, \eqref{e:66} 
yields $\prox_f x=(\mathsf{F}_i\mathsf{x}_i)_{i\in\II}=p$.
Conversely, suppose that $\prox_fx=p$ and fix $i\in\II$. We derive
from \eqref{e:66} and \eqref{e:g3} that
\begin{equation}
\label{e:61}
\mathsf{F}_i\mathsf{x}_i=\mathsf{p}_i=
\mathsf{S}_i\mathsf{q}_i
=\mathsf{S}_i(\mathsf{Q}_i\overline{\mathsf{x}}_i)
=\mathsf{F}_i\overline{\mathsf{x}}_i.
\end{equation}
We must show that $\mathsf{Q}_i\mathsf{x}_i=\mathsf{q}_i$. It
follows from \eqref{e:g1}, \eqref{e:60}, and \eqref{e:61} that
\begin{equation}
d_{\mathsf{C}_i}(\mathsf{x}_i)\leq\omega_i
\;\:\Leftrightarrow\;\:
\mathsf{Q}_i\mathsf{x}_i=
\proj_{\mathsf{C}_i}\mathsf{x}_i=
\mathsf{F}_i\mathsf{x}_i=\mathsf{F}_i\overline{\mathsf{x}}_i\in
\mathsf{C}_i
\;\:\Rightarrow\;\:
\begin{cases}
d_{\mathsf{C}_i}(\overline{\mathsf{x}}_i)\leq\omega_i\\
\mathsf{Q}_i\mathsf{x}_i=
\proj_{\mathsf{C}_i}\overline{\mathsf{x}}_i=
\mathsf{Q}_i\overline{\mathsf{x}}_i=\mathsf{q}_i.
\end{cases}
\end{equation}
On the other hand, \eqref{e:g1} yields
\begin{equation}
\label{e:64}
d_{\mathsf{C}_i}(\mathsf{x}_i)>\omega_i\quad\Rightarrow\quad
\mathsf{Q}_i\mathsf{x}_i=\mathsf{x}_i,
\end{equation}
while \eqref{e:61} and \eqref{e:60} yield
\begin{eqnarray}
d_{\mathsf{C}_i}(\mathsf{x}_i)>\omega_i
&\Rightarrow&
\mathsf{p}_i=\mathsf{F}_i\overline{\mathsf{x}}_i=
\mathsf{F}_i\mathsf{x}_i=
\proj_{\mathsf{C}_i}\mathsf{x}_i+
\bigg(1-\dfrac{\omega_i}{d_{\mathsf{C}_i}(\mathsf{x}_i)}\bigg)
\big(\mathsf{x}_i-\proj_{\mathsf{C}_i}\mathsf{x}_i\big)
\notin\mathsf{C}_i
\label{e:62}\\
&\Rightarrow&
\mathsf{F}_i\overline{\mathsf{x}}_i=
\proj_{\mathsf{C}_i}\overline{\mathsf{x}}_i+
\bigg(1-\dfrac{\omega_i}{d_{\mathsf{C}_i}
(\overline{\mathsf{x}}_i)}\bigg)
\big(\overline{\mathsf{x}}_i-\proj_{\mathsf{C}_i}
\overline{\mathsf{x}}_i\big)\;\:\text{and}\;\:
d_{\mathsf{C}_i}(\overline{\mathsf{x}}_i)>\omega_i
\label{e:63}\\
&\Rightarrow&
\mathsf{q}_i=\mathsf{Q}_i\overline{\mathsf{x}}_i=
\overline{\mathsf{x}}_i.
\end{eqnarray}
Therefore, in view of \eqref{e:64}, it remains to show that 
$\overline{\mathsf{x}}_i=\mathsf{x}_i$. Set 
$\mathsf{r}_i=\proj_{\mathsf{C}_i}\mathsf{p}_i$.
We deduce from \eqref{e:62}, \eqref{e:63}, and
\cite[Proposition~3.21]{Livre1} that
$\mathsf{r}_i=\proj_{\mathsf{C}_i}\overline{\mathsf{x}}_i
=\proj_{\mathsf{C}_i}\mathsf{x}_i$. Thus, \eqref{e:62} and 
\eqref{e:63} yield
\begin{equation}
\mathsf{p}_i-\mathsf{r}_i=
\bigg(1-\dfrac{\omega_i}{\|\mathsf{x}_i-\mathsf{r}_i\|}\bigg)
(\mathsf{x}_i-\mathsf{r}_i)=
\bigg(1-\dfrac{\omega_i}{\|\overline{\mathsf{x}}_i-
\mathsf{r}_i\|}\bigg)(\overline{\mathsf{x}}_i-\mathsf{r}_i).
\end{equation}
Taking the norm of both sides yields 
$\|\mathsf{x}_i-\mathsf{r}_i\|=
\|\overline{\mathsf{x}}_i-\mathsf{r}_i\|$ and hence
$\overline{\mathsf{x}}_i=\mathsf{x}_i$.
\end{proof}

\section{A block-iterative extrapolated algorithm for 
best approximation}
\label{sec:3}

We propose a flexible algorithm to solve the following abstract
best approximation problem. This new algorithm, which is of
interest in its own right, will be specialized in
Section~\ref{sec:4} to the setting of Problem~\ref{prob:1}.

\begin{problem}
\label{prob:6}
Let $\HH$ be a real Hilbert space, let $(C_i)_{i\in I}$ be
an at most countable family of closed convex subsets of $\HH$ 
with nonempty intersection $C$, and let $x_0\in\HH$. The goal 
is to find $\proj_Cx_0$, i.e., to
\begin{equation}
\label{e:06}
\text{minimize}\:\;\|x-x_0\|\quad\text{subject to}\quad
x\in\bigcap_{i\in I}C_i.
\end{equation}
\end{problem}

In 1968, Yves Haugazeau proposed in his unpublished thesis
\cite{Haug68} an iterative method to solve Problem~\ref{prob:6}
when $I$ is finite. His algorithm proceeds by periodic projections
onto the individual sets.

\begin{proposition}{\rm\cite[Th\'eor\`eme~3-2]{Haug68}}
\label{p:yves}
In Problem~\ref{prob:6}, suppose that $I$ is finite, say 
$I=\{0,\ldots,m-1\}$, where $2\leq m\in\NN$. Given 
$(s,t)\in\HH^2$ such that 
\begin{equation}
\label{e:yh2}
D=\menge{x\in\HH}{\scal{x-s}{x_0-s}\leq 0\;\:\text{and}\;\:
\scal{x-t}{s-t}\leq 0}\neq\emp,
\end{equation}
set $\chi=\scal{x_0-s}{s-t}$, $\mu=\|x_0-s\|^2$, $\nu=\|s-t\|^2$, 
and $\rho=\mu\nu-\chi^2$, and define
\begin{equation}
\label{e:yh3}
Q(x_0,s,t)=\proj_Dx_0=
\begin{cases}
t,&\text{if}\;\:\rho=0\;\:\text{and}\;\:\chi\geq 0;\\
x_0+\bigg(1+\dfrac{\chi}{\nu}\bigg)(t-s),&\text{if}\;\:\rho>0\;\:
\text{and}\;\:\chi\nu\geq\rho;\\[3mm]
s+\dfrac{\nu}{\rho}\big(\chi(x_0-s)+\mu(t-s)\big),&\text{if}\;\:
\rho>0\;\:\text{and}\;\:\chi\nu<\rho.
\end{cases}
\end{equation}
Construct a sequence $(x_n)_{n\in\NN}$ by iterating
\begin{equation}
\label{e:yh1}
\begin{array}{l}
\text{for}\;\:n=0,1,\ldots\\
\left\lfloor
\begin{array}{l}
t_n=\proj_{C_{n({\text{\rm mod}\:m)}}}x_n\\
x_{n+1}=Q(x_0,x_n,t_n).
\end{array}
\right.
\end{array}
\end{equation}
Then $x_n\to\proj_Cx_0$.
\end{proposition}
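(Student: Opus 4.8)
The plan is to read the recursion \eqref{e:yh1} as a sequence of nested projections of the fixed anchor point $x_0$; the crux is the identity $Q(x_0,s,t)=\proj_Dx_0$ already asserted in \eqref{e:yh3}, which I would justify first. In \eqref{e:yh2}, $D$ is the intersection of the two closed half-spaces $H_1=\{x\in\HH:\scal{x-s}{x_0-s}\le0\}$ and $H_2=\{x\in\HH:\scal{x-t}{s-t}\le0\}$, whose normal directions $x_0-s$ and $s-t$ span a subspace $M$ of dimension at most $2$; since $D$ is invariant under translations by $M^\perp$ and $x_0\in s+M$, one has $\proj_Dx_0=\proj_{D\cap(s+M)}x_0\in\aff\{x_0,s,t\}$, so computing it is an at most two-dimensional exercise. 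There, $\rho=\mu\nu-\chi^2$ is the Gram determinant of $\{x_0-s,\,s-t\}$, hence $\rho\ge0$ by Cauchy--Schwarz: $\rho=0$ signals collinearity of these vectors (the first, degenerate branch, where $D$ reduces to the single face through $t$), while for $\rho>0$ the sign of $\chi\nu-\rho$ decides whether $\proj_Dx_0$ lies on the relative interior of $\bdry H_2\cap D$ (second branch) or at $\bdry H_1\cap\bdry H_2$ (third branch); each branch is obtained by solving the corresponding first-order optimality system. This verification is classical but is the only genuinely computational ingredient, and I expect it to be the main obstacle.

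Next I would install the invariant. Set $D_n=\{x\in\HH:\scal{x-x_n}{x_0-x_n}\le0\}\cap\{x\in\HH:\scal{x-t_n}{x_n-t_n}\le0\}$ and prove by induction that, for every $n\in\NN$, $C\subset D_n$ and $x_{n+1}=\proj_{D_n}x_0$. Membership of $C$ in the second half-space is immediate from $t_n=\proj_{C_{n(\mathrm{mod}\,m)}}x_n$ and the variational characterization of the projection, because $C\subset C_{n(\mathrm{mod}\,m)}$. For the first half-space, the case $n=0$ is trivial (that half-space is all of $\HH$, since $x_0-x_0=0$), and if $C\subset D_{n-1}$ with $x_n=\proj_{D_{n-1}}x_0$, then the characterization of $\proj_{D_{n-1}}x_0$ gives $\scal{y-x_n}{x_0-x_n}\le0$ for every $y\in D_{n-1}\supset C$. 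Hence $C\subset D_n$; in particular $D_n\ne\emp$, so $D_n$ is a nonempty closed convex set, the first step applies, and $x_{n+1}=Q(x_0,x_n,t_n)=\proj_{D_n}x_0$. This also shows that the nonemptiness hypothesis underlying $Q$ is automatically met at every step.

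Now I would extract the asymptotics. From $x_{n+1}\in D_n$ we get $\scal{x_{n+1}-x_n}{x_0-x_n}\le0$, and expanding $\|x_0-x_{n+1}\|^2$ yields $\|x_0-x_{n+1}\|^2\ge\|x_0-x_n\|^2+\|x_n-x_{n+1}\|^2$; moreover $C\subset D_n$ gives $\|x_0-x_{n+1}\|=d_{D_n}(x_0)\le d_C(x_0)$. Thus $(\|x_0-x_n\|)_{n\in\NN}$ is nondecreasing and bounded, hence convergent, $(x_n)_{n\in\NN}$ is bounded, and $\sum_{n\in\NN}\|x_n-x_{n+1}\|^2<\pinf$, so $x_n-x_{n+1}\to0$. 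Using next $\scal{x_{n+1}-t_n}{x_n-t_n}\le0$ in the decomposition $\|x_n-t_n\|^2=\scal{x_n-x_{n+1}}{x_n-t_n}+\scal{x_{n+1}-t_n}{x_n-t_n}$ together with Cauchy--Schwarz gives $d_{C_{n(\mathrm{mod}\,m)}}(x_n)=\|x_n-t_n\|\le\|x_n-x_{n+1}\|\to0$. Since each $d_{C_j}$ is nonexpansive and the index cycles with period $m$, for each fixed $j\in\{0,\dots,m-1\}$ and each $n$ there is $n'\in\{n,\dots,n+m-1\}$ with $n'\equiv j\ (\mathrm{mod}\,m)$, and $|d_{C_j}(x_n)-d_{C_j}(x_{n'})|\le\sum_{i=n}^{n'-1}\|x_i-x_{i+1}\|\to0$; combined with $d_{C_j}(x_{n'})\to0$ this yields $d_{C_j}(x_n)\to0$ for every $j$.

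Finally I would identify the limit. Let $x$ be any weak sequential cluster point of the bounded sequence $(x_n)_{n\in\NN}$, say $x_{k_\ell}\weakly x$. Each $d_{C_j}$ is convex and continuous, hence weakly lower semicontinuous, so $d_{C_j}(x)\le\liminf_\ell d_{C_j}(x_{k_\ell})=0$ for every $j$, i.e., $x\in C$, whence $\|x_0-x\|\ge d_C(x_0)$. On the other hand, weak lower semicontinuity of $\|x_0-\cdot\|$ and the previous step give $\|x_0-x\|\le\liminf_\ell\|x_0-x_{k_\ell}\|=\lim_n\|x_0-x_n\|\le d_C(x_0)$. Therefore $\|x_0-x\|=d_C(x_0)$, so $x=\proj_Cx_0$ by uniqueness of the projection. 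Hence the bounded sequence $(x_n)_{n\in\NN}$ has the unique weak cluster point $\proj_Cx_0$, so $x_n\weakly\proj_Cx_0$; and since $\|x_0-x_n\|\to d_C(x_0)=\|x_0-\proj_Cx_0\|$, weak convergence together with convergence of the norms in the Hilbert space $\HH$ forces $x_n\to\proj_Cx_0$. Thus the entire argument is the standard Fej\'er-monotonicity-plus-asymptotic-regularity pattern once the invariant $C\subset D_n$, $x_{n+1}=\proj_{D_n}x_0$ and the closed form for $Q$ are in hand, the latter being the step requiring the most care.
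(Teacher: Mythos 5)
The paper offers no proof of Proposition~\ref{p:yves} --- it is quoted verbatim from Haugazeau's thesis \cite{Haug68} --- so there is nothing in the text to compare against line by line; judged on its own, your argument is correct and complete, and it is exactly the mechanism the paper later packages as Lemma~\ref{l:ws} via the weak-to-strong convergence principle of \cite{Moor01}: the invariant $C\subset D_n$ with $x_{n+1}=\proj_{D_n}x_0$, the resulting monotonicity of $\|x_0-x_n\|$ and summability of $\|x_{n+1}-x_n\|^2$, the estimate $d_{C_{n(\text{mod}\,m)}}(x_n)\le\|x_{n+1}-x_n\|$, and the weak-cluster-point criterion upgraded to strong convergence by convergence of norms. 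The only part you leave as a sketch is the verification of the closed form \eqref{e:yh3}, which is acceptable since the identity $Q(x_0,s,t)=\proj_Dx_0$ is asserted as part of the statement and your reduction to the at most two-dimensional affine hull of $\{x_0,s,t\}$ is the standard way to check it.
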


Haugazeau's algorithm uses only one set at each iteration. The
following variant due to Guy Pierra uses all of them
simultaneously.

\begin{proposition}{\rm\cite[Th\'eor\`eme~V.1]{Pier76}}
\label{p:guy}
In Problem~\ref{prob:6}, suppose that $I$ is finite, let $Q$
be as in Proposition~\ref{p:yves}, set $\omega=1/\card I$, and
fix $\varepsilon\in\zeroun$. Construct a sequence 
$(x_n)_{n\in\NN}$ by iterating
\begin{equation}
\label{e:gp1}
\begin{array}{l}
\text{for}\;\:n=0,1,\ldots\\
\left\lfloor
\begin{array}{l}
\text{for every}\;\:i\in I\\
\left\lfloor
\begin{array}{l}
a_{i,n}=\proj_{C_i}x_n\\
\theta_{i,n}=\|a_{i,n}-x_n\|^2\\
\end{array}
\right.\\
\theta_n=\omega\sum_{i\in I}\theta_{i,n}\\
\text{if}\;\theta_n=0\\
\left\lfloor
\begin{array}{l}
t_n=x_n\\
\end{array}
\right.\\
\text{else}\\
\left\lfloor
\begin{array}{l}
d_n=\omega\sum_{i\in I}a_{i,n}\\
y_n=d_n-x_n\\
\lambda_n=\theta_n/\|y_n\|^2\\
t_n=x_n+\lambda_ny_n\\
\end{array}
\right.\\[0.7mm]
x_{n+1}=Q(x_0,x_n,t_n).
\end{array}
\right.
\end{array}
\end{equation}
Then $x_n\to\proj_Cx_0$.
\end{proposition}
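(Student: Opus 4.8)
The plan is to recast the parallel iteration \eqref{e:gp1} as a single-constraint Haugazeau recursion in a product space — Pierra's classical device — and then to supply the two ingredients (separation of the solution set and a focusing condition) on which the convergence analysis behind Proposition~\ref{p:yves} rests. Put $m=\card I$ and $\omega=1/m$, and equip $\boldsymbol{\mathcal H}=\bigoplus_{i\in I}\HH$ with the renormalized scalar product $\langle\!\langle(u_i)_{i\in I}\mid(v_i)_{i\in I}\rangle\!\rangle=\omega\sum_{i\in I}\scal{u_i}{v_i}$. Since $\sum_{i\in I}\omega=1$, the diagonal embedding $j\colon\HH\to\boldsymbol{\mathcal H}\colon x\mapsto(x,\ldots,x)$ is a linear isometry onto the closed subspace $\mathbf D=\ran j$. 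Set $\mathbf C=\prod_{i\in I}C_i$, a closed convex subset of $\boldsymbol{\mathcal H}$, and $\mathbf S=\mathbf C\cap\mathbf D$; then $\mathbf S=j(C)$ and $\proj_{\mathbf S}(jx_0)=j(\proj_C x_0)$, so it suffices to show that $(jx_n)_{n\in\NN}$ converges strongly to $\proj_{\mathbf S}(jx_0)$.

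Next I would rewrite \eqref{e:gp1} in $\boldsymbol{\mathcal H}$. For $\mathbf x\in\mathbf D$ one has $\proj_{\mathbf C}\mathbf x=(\proj_{C_i}x)_{i\in I}$ and $\proj_{\mathbf D}\mathbf y=j(\omega\sum_{i\in I}y_i)$. Hence, writing $\mathbf x_n=jx_n$ for all $n\in\NN$, the scalars $\theta_n$ equal $\|\proj_{\mathbf C}\mathbf x_n-\mathbf x_n\|^2$, the vectors $jd_n$ equal $\proj_{\mathbf D}\proj_{\mathbf C}\mathbf x_n$, and $jy_n=\proj_{\mathbf D}(\proj_{\mathbf C}\mathbf x_n-\mathbf x_n)$. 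A short computation with the half-space projection formula then identifies the update as $jt_n=\mathbf x_n+\lambda_n jy_n=\proj_{\mathbf H_n\cap\mathbf D}\mathbf x_n$, where $\mathbf H_n=\menge{\mathbf y\in\boldsymbol{\mathcal H}}{\langle\!\langle\mathbf y-\proj_{\mathbf C}\mathbf x_n\mid\mathbf x_n-\proj_{\mathbf C}\mathbf x_n\rangle\!\rangle\le0}$ is the supporting half-space of $\mathbf C$ at $\proj_{\mathbf C}\mathbf x_n$, so that $\mathbf C\subseteq\mathbf H_n$; the degenerate case $\theta_n=0$ is exactly $\mathbf x_n\in\mathbf C$, and then $jt_n=\mathbf x_n$. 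Finally, because $j$ is an isometry and each branch of \eqref{e:yh3} is an affine combination of its three arguments while $\mathbf D$ is a subspace, $Q(\mathbf x_0,\mathbf x_n,jt_n)=j(Q(x_0,x_n,t_n))=jx_{n+1}$ and stays in $\mathbf D$. Thus $(jx_n)_{n\in\NN}$ is exactly the recursion $\mathbf x_{n+1}=Q(\mathbf x_0,\mathbf x_n,\proj_{\mathbf H_n\cap\mathbf D}\mathbf x_n)$.

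Three verifications then finish the proof. First (separation): since $\mathbf C\subseteq\mathbf H_n$ we get $\mathbf S\subseteq\mathbf H_n\cap\mathbf D$, and because $jt_n$ is the projection of $\mathbf x_n$ onto this set, $\langle\!\langle\mathbf s-jt_n\mid\mathbf x_n-jt_n\rangle\!\rangle\le0$ for every $\mathbf s\in\mathbf S$; a routine induction, exactly as in the proof of Proposition~\ref{p:yves}, then shows that $\mathbf S$ lies in the set $D$ of \eqref{e:yh2} at every step, so $Q$ is well defined throughout and $\proj_{\mathbf S}(jx_0)$ is the candidate limit. Second (step sizes vanish): the standard Fej\'er bookkeeping of the Haugazeau scheme shows that $(\|\mathbf x_n-\mathbf x_0\|)_{n\in\NN}$ is bounded and nondecreasing, hence convergent, so $\|\mathbf x_{n+1}-\mathbf x_n\|\to0$; since $\mathbf x_{n+1}$ lies in the half-space with boundary through $jt_n$ and normal $\mathbf x_n-jt_n$, this yields $\|\mathbf x_n-jt_n\|\le\|\mathbf x_{n+1}-\mathbf x_n\|\to0$; Pythagoras (using $\mathbf x_n\in\mathbf D$ and $\proj_{\mathbf C}\mathbf x_n-\proj_{\mathbf D}\proj_{\mathbf C}\mathbf x_n\perp\mathbf D$) gives $\theta_n=\|\proj_{\mathbf C}\mathbf x_n-\mathbf x_n\|^2\ge\|jy_n\|^2$, i.e.\ $\lambda_n\ge1$, whence $\theta_n\le\lambda_n^2\|jy_n\|^2=\|\mathbf x_n-jt_n\|^2\to0$; as $\theta_n=\omega\sum_{i\in I}d_{C_i}^2(x_n)$ with $\omega>0$ and $I$ finite, we obtain $d_{C_i}(x_n)\to0$ for every $i\in I$. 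Third (focusing): if $x_{n_k}\weakly x$ along a subsequence, weak lower semicontinuity of each $d_{C_i}$ forces $d_{C_i}(x)=0$, i.e.\ $x\in C_i$, for every $i\in I$, so $x\in C$ and $jx\in\mathbf S$. These are precisely the separation and focusing hypotheses used in the proof of Proposition~\ref{p:yves}; that argument therefore applies and gives $jx_n\to\proj_{\mathbf S}(jx_0)=j(\proj_C x_0)$, and applying $j^{-1}$ concludes that $x_n\to\proj_C x_0$.

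The main obstacle is the identification in the second paragraph: one must check, with the explicit formulas, that Pierra's aggregated and extrapolated step $t_n=x_n+\lambda_n y_n$ is precisely $j^{-1}(\proj_{\mathbf H_n\cap\mathbf D}\,jx_n)$ — in particular, that the extrapolation factor $\lambda_n=\theta_n/\|y_n\|^2$ is the one dictated by projecting onto the slice $\mathbf H_n\cap\mathbf D$ rather than onto $\mathbf H_n$ itself — together with the invariance of $\mathbf D$ under $Q(\mathbf x_0,\cdot,\cdot)$. Once this correspondence is in place, the remaining work is routine: the three items above are verifications of the hypotheses that let the convergence argument of Proposition~\ref{p:yves} run with the cyclically visited sets $C_{n\,(\mathrm{mod}\,m)}$ replaced by the single ``outer'' slices $\mathbf H_n\cap\mathbf D$, the focusing step being in fact simpler here because all the distances $d_{C_i}(x_n)$ tend to $0$ simultaneously.
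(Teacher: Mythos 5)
Your argument is correct, but there is nothing in the paper to compare it against: Proposition~\ref{p:guy} is stated as a classical result and ``proved'' only by the citation to Pierra's Th\'eor\`eme~V.1. What you have reconstructed is essentially Pierra's own device -- the \emph{\'eclatement de contraintes} -- namely passing to the weighted product space $\boldsymbol{\mathcal H}$, identifying the aggregated extrapolated step $t_n=x_n+\lambda_ny_n$ with the exact projection of $jx_n$ onto the slice $\mathbf H_n\cap\mathbf D$ of the supporting half-space of $\mathbf C$ at $\proj_{\mathbf C}(jx_n)$, and then running the Haugazeau outer scheme on these slices. Your computation of the projection onto $\mathbf H_n\cap\mathbf D$ is right (the normal within $\mathbf D$ is $\proj_{\mathbf D}(\mathbf x_n-\proj_{\mathbf C}\mathbf x_n)=-jy_n$ and the violation is exactly $\theta_n$), and the three verifications (separation of $\mathbf S$, $\theta_n\le\lambda_n^2\|y_n\|^2=\|t_n-x_n\|^2\to0$ via $\lambda_n\ge1$, and weak lower semicontinuity of the $d_{C_i}$ for focusing) are exactly the hypotheses of the paper's Lemma~\ref{l:ws}, which you could invoke directly with $T_n=\proj_{\mathbf H_n\cap\mathbf D}\in\mathfrak{T}$ and $\Fix T_n\supset\mathbf S$ instead of redoing the Fej\'er bookkeeping by hand. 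Two small points are worth a line each: (i) when $\theta_n\neq0$ you divide by $\|y_n\|^2$, so you should note that $y_n\neq0$; this follows because any $\mathbf s\in\mathbf S$ satisfies $\sscal{\mathbf x_n-\mathbf s}{-jy_n}=\sscal{\mathbf x_n-\mathbf s}{\mathbf x_n-\proj_{\mathbf C}\mathbf x_n}\geq\theta_n>0$, so $\emp\neq\mathbf S\subset\mathbf H_n\cap\mathbf D$ forces $\proj_{\mathbf D}(\mathbf x_n-\proj_{\mathbf C}\mathbf x_n)\neq 0$; (ii) your argument never uses the parameter $\varepsilon$, which indeed plays no role in \eqref{e:gp1} as stated. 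Incidentally, the same conclusion also drops out of the paper's later Theorem~\ref{t:1} by taking $I'$ to consist of $\HH$ alone, $I_n=I$, $T_{i,n}=\proj_{C_i}$, and uniform weights, with condition \ref{t:1ii} supplied by demiclosedness of $\Id-\proj_{C_i}$; your product-space route is the more elementary and self-contained of the two.
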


\begin{remark}
\label{r:gp}
An attractive feature of Pierra's algorithm \eqref{e:gp1} is
that, by convexity of $\|\cdot\|^2$, the relaxation parameter
$\lambda_n$ can extrapolate beyond $1$, hence attaining 
large values that induce fast convergence \cite{Sign03,Pier76}.
\end{remark}

Propositions~\ref{p:yves} and \ref{p:guy} were unified and extended
in \cite[Section~6.5]{Sico00} in the form of an algorithm for
solving Problem~\ref{prob:6} which is block-iterative in the sense
that, at iteration $n\in\NN$, only a subfamily of sets 
$(C_i)_{i\in I_n}$
needs to be activated, as opposed to all of them in \eqref{e:gp1}.
Block-iterative structures save time per iteration in two ways:
firstly, they do not require that every constraint be activated;
secondly, at every $n\in\NN$, activation of each constraint indexed
in $I_n$ can be performed in parallel and hence it is common to
select $\card I_n$ equal to the number of available processors.
Furthermore, in \cite[Section~6.5]{Sico00}, the sets 
$(C_i)_{i\in I}$ were specified as lower level sets of certain
functions and were activated by projections onto supersets instead
of exact ones as in \eqref{e:yh1} and \eqref{e:gp1}. 
Below, we propose an alternative block-iterative
scheme (Algorithm~\ref{algo:1}) which is more sophisticated in that
it leverages the affine structure of some sets $(C_i)_{i\in I'}$ to
produce deeper relaxation steps, hence providing extra acceleration
to the algorithm. Such affine-convex extrapolation techniques were
first discussed in \cite{Numa06}, where a weakly convergent method
was designed to solve convex feasibility problems, i.e., to find an
unspecified point in the intersection of closed convex sets.
Additionally, as will be seen in Section~\ref{sec:4}, this new
algorithm will be better suited to solve Problem~\ref{prob:1} to
the extent that it utilizes a fixed point model for the activation
of the sets. The following notions and facts lay the groundwork for
developing our best approximation algorithm.

\begin{definition}{\rm\cite[Section~4.1]{Livre1}}
\label{d:fqn}
$\mathfrak{T}$ is the class of \emph{firmly quasinonexpansive 
operators} from $\HH$ to $\HH$, i.e., 
\begin{equation}
\label{e:fqn}
\mathfrak{T}=\menge{T\colon\HH\to\HH}{(\forall x\in\HH)
(\forall y\in\Fix T)\;\:\scal{y-Tx}{x-Tx}\leq 0}.
\end{equation}
\end{definition}

\begin{example}{\rm\cite{Moor01,Livre1}} 
\label{ex:01}
Let $T\colon\HH\to\HH$ and set $C=\Fix T$. Then $T\in\mathfrak{T}$
in each of the following cases: 
\begin{enumerate}
\item
$T$ is the projector onto a nonempty closed convex subset $C$ of 
$\HH$.
\item
$T$ is the proximity operator of a function $f\in\Gamma_0(\HH)$.
Then $C=\Argmin f$.
\item
$T$ is the resolvent of a maximally monotone operator
$A\colon\HH\to 2^{\HH}$. Then $C=\menge{x\in\HH}{0\in Ax}$ is the
set of zeros of $A$.
\item
$T$ is firmly nonexpansive.
\item
$R=2T-\Id$ is quasinonexpansive:
$(\forall x\in\HH)(\forall y\in\Fix R)$ 
$\|Rx-y\|\leq\|x-y\|$. Then $C=\Fix R$.
\item
$T$ is a subgradient projector onto the lower level set 
$C=\menge{x\in\HH}{f(x)\leq 0}\neq\emp$ of a continuous convex
function $f\colon\HH\to\RR$, that is, given a selection $s$ of the
subdifferential of $f$,
\begin{equation}
\label{e:sproj}
(\forall x\in\HH)\;\:Tx=\sproj_C x=
\begin{cases}
x-\displaystyle{\frac{f(x)}{\|s(x)\|^2}}s(x),
&\;\text{if}\;\:f(x)>0;\\
x,&\;\text{if}\;\:f(x)\leq 0.
\end{cases}
\end{equation}
\end{enumerate}
\end{example}

\begin{lemma}{\rm\cite{Moor01,Livre1}} 
\label{l:12}
Let $T\colon\HH\to\HH$. If $T\in\mathfrak{T}$, then $\Fix T$ is
closed and convex. Conversely, if $C$ is a nonempty closed convex
subset of $\HH$, then $C=\Fix T$, where $T=\proj_C\in\mathfrak{T}$.
\end{lemma}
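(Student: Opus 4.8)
The plan is to handle the two implications separately: the forward one by exhibiting $\Fix T$ as an intersection of closed half-spaces, and the converse one by invoking the classical projection theorem together with the variational characterization of $\proj_C$.

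For the forward direction, assume $T\in\mathfrak{T}$. For each $x\in\HH$ I would introduce the set $H_x=\menge{y\in\HH}{\scal{y-Tx}{x-Tx}\leq 0}$, which is a closed half-space when $Tx\neq x$ and equals $\HH$ when $Tx=x$; in either case it is closed and convex. The defining inequality \eqref{e:fqn} says exactly that $\Fix T\subseteq H_x$ for every $x\in\HH$, so $\Fix T\subseteq\bigcap_{x\in\HH}H_x$. For the reverse inclusion, I would take $y$ in the intersection and specialize to the choice $x=y$, which gives $\|y-Ty\|^2=\scal{y-Ty}{y-Ty}\leq 0$ and hence $y\in\Fix T$. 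Thus $\Fix T=\bigcap_{x\in\HH}H_x$, an intersection of closed convex sets, and is therefore itself closed and convex.

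For the converse, let $C$ be a nonempty closed convex subset of $\HH$. Since $\HH$ is a Hilbert space, $\proj_C$ is well defined on all of $\HH$; because $\proj_C x\in C$ for every $x\in\HH$ and $\proj_C x=x$ for every $x\in C$, one gets $\Fix\proj_C=C$. Finally, the variational characterization of the projection onto a nonempty closed convex set yields $\scal{y-\proj_C x}{x-\proj_C x}\leq 0$ for every $x\in\HH$ and every $y\in C=\Fix\proj_C$, which is precisely the membership condition \eqref{e:fqn}; hence $\proj_C\in\mathfrak{T}$. (This last fact is also already recorded in Example~\ref{ex:01}.)

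I do not expect a genuine obstacle here; the only point requiring a moment's thought is that the half-space representation $\Fix T=\bigcap_{x\in\HH}H_x$ is an \emph{equality}, not merely an inclusion, which is secured by testing the intersection at the point $x=y$ itself. Alternatively, the entire statement can simply be cited from \cite{Moor01} or \cite[Section~4.1]{Livre1}.
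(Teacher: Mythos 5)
Your proof is correct. The paper gives no proof of this lemma---it is simply cited from \cite{Moor01,Livre1}---and your argument (writing $\Fix T=\bigcap_{x\in\HH}H_x$ as an intersection of closed half-spaces, securing the reverse inclusion by testing at $x=y$, and using the variational characterization of $\proj_C$ for the converse) is precisely the standard one recorded in those references.
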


\begin{lemma}
\label{l:ws}
Let $(T_n)_{n\in\NN}$ be a sequence of operators in 
$\mathfrak{T}$ such that 
$\emp\neq C\subset\bigcap_{n\in\NN}\Fix T_n$, let
$x_0\in\HH$, let $Q$ be as in Proposition~\ref{p:yves}, and for
every $n\in\NN$, set $x_{n+1}=Q(x_0,x_n,T_nx_n)$. 
Then the following hold:
\begin{enumerate}
\item
\label{l:wsi}
$(x_n)_{n\in\NN}$ is well defined.
\item
\label{l:wsii}
$\sum_{n\in\NN}\|x_{n+1}-x_n\|^2<+\infty$.
\item
\label{l:wsiii}
$\sum_{n\in\NN}\|T_nx_n-x_n\|^2<+\infty$.
\item
\label{l:wsiv}
$x_n\to\proj_{C}x_0$ if and only if all the weak sequential 
cluster points of $(x_n)_{n\in\NN}$ lie in $C$.
\end{enumerate}
\end{lemma}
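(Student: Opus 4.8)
The plan is to follow the classical analysis of Haugazeau-type sequences, organized around the two halfspaces hidden inside the operator $Q$. For every $n\in\NN$ I write $t_n=T_nx_n$ and set
\[
H_n=\menge{x\in\HH}{\scal{x-x_n}{x_0-x_n}\leq 0}\quad\text{and}\quad H_n'=\menge{x\in\HH}{\scal{x-t_n}{x_n-t_n}\leq 0},
\]
so that, by Proposition~\ref{p:yves}, $x_{n+1}=Q(x_0,x_n,t_n)=\proj_{H_n\cap H_n'}x_0$ as soon as $H_n\cap H_n'\neq\emp$. The first step is to establish, by induction on $n$, the invariant $C\subset H_n\cap H_n'$; this also proves \ref{l:wsi}. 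The inclusion $C\subset H_n'$ is immediate from firm quasinonexpansiveness: if $y\in C\subset\Fix T_n$ then $\scal{y-T_nx_n}{x_n-T_nx_n}\leq 0$ by \eqref{e:fqn}. For $C\subset H_n$, note that $H_0=\HH$ (since $x_0-x_0=0$), and that if $C\subset H_n$ then $C\subset H_n\cap H_n'\neq\emp$, so $x_{n+1}$ is well defined and the variational characterization of the projection onto the closed convex set $H_n\cap H_n'$ gives $\scal{y-x_{n+1}}{x_0-x_{n+1}}\leq 0$ for every $y\in C$, i.e.\ $C\subset H_{n+1}$.

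Next I would extract two quantitative estimates. Since $x_{n+1}\in H_n$, expanding $\|x_0-x_{n+1}\|^2=\|(x_0-x_n)+(x_n-x_{n+1})\|^2$ and using $\scal{x_{n+1}-x_n}{x_0-x_n}\leq 0$ yields the Pythagorean-type bound $\|x_0-x_{n+1}\|^2\geq\|x_0-x_n\|^2+\|x_{n+1}-x_n\|^2$; in particular $(\|x_0-x_n\|)_{n\in\NN}$ is nondecreasing. Moreover $\proj_Cx_0\in C\subset H_{n-1}\cap H_{n-1}'$ and $x_n$ is the point of that set nearest $x_0$, so $\|x_0-x_n\|\leq\|x_0-\proj_Cx_0\|$ for every $n$. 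Telescoping the Pythagorean bound from $0$ to $N$ and letting $N\to\infty$ then gives \ref{l:wsii}, with $\sum_{n\in\NN}\|x_{n+1}-x_n\|^2\leq\|x_0-\proj_Cx_0\|^2$. For \ref{l:wsiii}, I would use instead $x_{n+1}\in H_n'$, i.e.\ $\scal{x_{n+1}-t_n}{x_n-t_n}\leq 0$: writing $\|x_n-t_n\|^2=\scal{x_n-x_{n+1}}{x_n-t_n}+\scal{x_{n+1}-t_n}{x_n-t_n}$, bounding the first term by Cauchy--Schwarz and discarding the nonpositive second term gives $\|T_nx_n-x_n\|\leq\|x_{n+1}-x_n\|$, so \ref{l:wsiii} follows from \ref{l:wsii}.

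For \ref{l:wsiv}, the forward implication is trivial. Conversely, assume every weak sequential cluster point of $(x_n)_{n\in\NN}$ lies in $C$. By the previous paragraph $(\|x_0-x_n\|)_{n\in\NN}$ is nondecreasing and bounded above by $\|x_0-\proj_Cx_0\|$, so it converges to some $\ell$, and $(x_n)_{n\in\NN}$ is bounded, hence has at least one weak sequential cluster point $\overline{x}$, which lies in $C$ by assumption. Since $\overline{x}\in C\subset H_n\cap H_n'$ for every $n$, we have $\|x_0-x_{n+1}\|\leq\|x_0-\overline{x}\|$, hence $\ell\leq\|x_0-\overline{x}\|$; on the other hand weak lower semicontinuity of $\|\cdot\|$ along the subsequence converging to $\overline{x}$ gives $\|x_0-\overline{x}\|\leq\ell$, so $\|x_0-\overline{x}\|=\ell$. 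Because $\overline{x}\in C$, we also have $\|x_0-\proj_Cx_0\|\leq\|x_0-\overline{x}\|=\ell\leq\|x_0-\proj_Cx_0\|$, so $\ell=\|x_0-\proj_Cx_0\|=\|x_0-\overline{x}\|$, and uniqueness of the minimizer of $\|x_0-\cdot\|$ over $C$ forces $\overline{x}=\proj_Cx_0$. Thus every weak sequential cluster point of the bounded sequence $(x_n)_{n\in\NN}$ equals $\proj_Cx_0$, which yields $x_n\weakly\proj_Cx_0$; combining this with $\|x_0-x_n\|\to\|x_0-\proj_Cx_0\|$ shows that $\|(x_0-x_n)-(x_0-\proj_Cx_0)\|^2\to 0$, i.e.\ $x_n\to\proj_Cx_0$. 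The only delicate point is in part \ref{l:wsiv}: one must pin down the value $\ell=\|x_0-\proj_Cx_0\|$ using a \emph{single} weak cluster point before concluding weak convergence of the whole sequence, and then upgrade weak convergence to strong convergence via the Radon--Riesz property of the Hilbert space applied to the shifted sequence $(x_0-x_n)_{n\in\NN}$. Everything else is routine bookkeeping with the halfspaces $H_n$, $H_n'$ and the variational inequality characterizing the metric projection.
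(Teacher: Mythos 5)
Your proof is correct and is essentially the argument the paper invokes: the paper's own proof simply cites \cite[Proposition~3.4(v) and Theorem~3.5]{Moor01} and remarks that those proofs carry over when $C$ is merely a subset of $\bigcap_{n\in\NN}\Fix T_n$, and your halfspace analysis of $H_n$ and $H_n'$, with the induction $C\subset H_n\cap H_n'$, the Pythagorean telescoping, the bound $\|T_nx_n-x_n\|\leq\|x_{n+1}-x_n\|$, and the weak-cluster-point identification followed by the Radon--Riesz upgrade, is precisely that argument written out in full. In particular, your induction only ever uses the inclusion $C\subset\Fix T_n$ (never equality), which is exactly the ``inspection'' that the paper's one-line proof alludes to.
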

\begin{proof}
In the case when $\emp\neq C=\bigcap_{n\in\NN}\Fix T_n$, the
results are shown in 
\cite[Proposition~3.4(v) and Theorem~3.5]{Moor01}. However, an
inspection of these proofs reveals that they remain true in our
context.
\end{proof}

We are now in a position to introduce our best approximation
algorithm for solving Problem~\ref{prob:6}. It incorporates
ingredients of the best approximation method of
\cite[Section~6.5]{Sico00} and of the convex feasibility method of
\cite{Numa06}. 

\begin{algorithm}
\label{algo:1}
Consider the setting of Problem~\ref{prob:6} and 
denote by $(C_i)_{i\in I'}$ a subfamily of 
$(C_i)_{i\in I}$ of closed affine subspaces the projectors onto
which are easy to implement; this subfamily is assumed to be
nonempty as $\HH$ can be included in it. Let $Q$ be as in
Proposition~\ref{p:yves}, fix $\varepsilon\in\zeroun$, and iterate
\begin{equation}
\label{e:algo1}
\hskip -0.6mm
\begin{array}{l}
\text{for}\;\:n=0,1,\ldots\\
\left\lfloor
\begin{array}{l}
\text{take}\;\:\ii(n)\in I'\\
z_n=\proj_{C_{\ii(n)}}x_n\\
\text{take a nonempty finite set}\;\:I_n\subset I\\
\text{for every}\;\:i\in I_n\\
\left\lfloor
\begin{array}{l}
\text{take}\;\:T_{i,n}\in\mathfrak{T}\;\:\text{such that}\;\:
\Fix T_{i,n}=C_i\\
a_{i,n}=T_{i,n}z_n\\
\theta_{i,n}=\|a_{i,n}-z_n\|^2\\
\end{array}
\right.\\
\text{take}\;\:j_n\in I_n\;\:\text{such that}\;\:
\theta_{j_n,n}=\text{\rm max}_{i\in I_n}\theta_{i,n}\\
\text{take}\;\:\{\omega_{i,n}\}_{i\in I_n}\subset[0,1]\;\:
\text{such that}\;\:\sum_{i\in I_n}\omega_{i,n}=1
\;\:\text{and}\;\:\omega_{j_n,n}\geq\varepsilon\\
I_n^+=\menge{i\in I_n}{\omega_{i,n}>0}\\
\theta_n=\sum_{i\in I_n^+}\omega_{i,n}\theta_{i,n}\\
\text{if}\;\theta_n=0\\
\left\lfloor
\begin{array}{l}
t_n=z_n\\
\end{array}
\right.\\
\text{else}\\
\left\lfloor
\begin{array}{l}
d_n=\sum_{i\in I_n^+}\omega_{i,n}a_{i,n}\\
y_n=\proj_{C_{\ii(n)}}d_n-z_n\\
\text{take}\;\:\lambda_n\in\big[\varepsilon\theta_n/\|d_n-z_n\|^2,
\theta_n/\|y_n\|^{2}\big]\\
t_n=z_n+\lambda_ny_n\\
\end{array}
\right.\\[0.7mm]
x_{n+1}=Q(x_0,x_n,t_n).
\end{array}
\right.\\
\end{array}
\end{equation}
\end{algorithm}

\begin{remark}
\label{r:24}
Let us highlight some special cases and features of 
Algorithm~\ref{algo:1}.
\begin{enumerate}
\item
\label{r:24i}
If the only closed affine subspace is $\HH$ then, for every
$n\in\NN$, $z_n=x_n$, and the resulting algorithm has a structure
similar to that of \cite[Section~6.5]{Sico00}, except that the
operators $(T_{i,n})_{i\in I_n}$ are chosen differently. In
particular, this setting captures \eqref{e:yh1} and \eqref{e:gp1}.
\item
\label{r:24iii}
Suppose that the last step of the algorithm at iteration $n\in\NN$
is replaced by $x_{n+1}=t_n$. Then we recover an instance of the
(weakly convergent) convex feasibility algorithm of \cite{Numa06}
to find an unspecified point in $C=\bigcap_{i\in I}C_i$.
\item
\label{r:24ii}
At iteration $n\in\NN$, a block of sets $(C_i)_{i\in I_n}$ is
selected and each of its elements is activated via a firmly
quasinonexpansive operator. Example~\ref{ex:01} provides various
options to choose these operators, depending on the nature of the
sets.
\item
\label{r:24iv}
If nontrivial affine sets are present then, at iteration 
$n\in\NN$, we have $z_n\neq x_n$ in general. Thus, as discussed in
\cite{Cens12} and its references in the context of feasibility
algorithms (see \ref{r:24iii}), the resulting step $t_n$ is 
larger than when $z_n=x_n$, which typically yields 
faster convergence. This point will be illustrated numerically
for our best approximation algorithm in Section~\ref{sec:5}.
\end{enumerate}
\end{remark}

We now establish the strong convergence of an arbitrary sequence
$(x_n)_{n\in\NN}$ generated by Algorithm~\ref{algo:1} to the
solution to Problem~\ref{prob:6}. The last component of the proof
relies on Lemma~\ref{l:ws}\ref{l:wsiv}, i.e., showing that the weak
sequential cluster points of $(x_n)_{n\in\NN}$ lie in $C$. The
same property is required in \cite[Theorem~3.3]{Numa06} to show the
weak convergence of the variant described in
Remark~\ref{r:24}\ref{r:24iii}. This parallels the weak-to-strong
convergence principle of \cite{Moor01}, namely the transformation
of weakly convergent feasibility methods into strongly convergent
best approximation methods.

\begin{theorem}
\label{t:1}
In the setting of Problem~\ref{prob:6}, let $(x_n)_{n\in\NN}$ be
generated by Algorithm \ref{algo:1}. Suppose that the following
hold:
\begin{enumerate}[label={\rm[\alph*]}]
\item
\label{t:1i}
There exist strictly positive integers $(M_i)_{i\in I}$ such that
\begin{equation}
\label{e:8}
(\forall i\in I)(\forall n\in\NN)\quad
i\in\bigcup_{l=n}^{n+M_i-1}\{\ii (l)\}\cup I_l.
\end{equation}
\item
\label{t:1ii}
For every $i\in I\smallsetminus I'$, every $x\in\HH$, and every
strictly increasing sequence $(r_n)_{n\in\NN}$ in $\NN$,
\begin{equation}
\label{e:fc}
\begin{cases}
i\in\bigcap_{n\in\NN}I_{r_n}\\
\proj_{C_{\ii(r_n)}}x_{r_n}\weakly x\\
T_{i,r_n}\big(\proj_{C_{\ii (r_n)}}x_{r_n}\big)
-\proj_{C_{\ii(r_n)}}x_{r_n}\to 0
\end{cases}
\qquad\Rightarrow\quad x\in C_i.
\end{equation}
\end{enumerate}
Then $x_n\to\proj_{C}x_0$.
\end{theorem}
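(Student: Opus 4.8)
The plan is to recognize that one pass of \eqref{e:algo1} --- the map $x_n\mapsto t_n$ --- is, up to choosing a concrete representative, a firmly quasinonexpansive operator whose fixed point set contains $C$, so that $(x_n)_{n\in\NN}$ becomes an instance of the Haugazeau-type scheme of Lemma~\ref{l:ws}. Fix $n\in\NN$ and $y\in C$. Since $C_{\ii(n)}$ is a closed affine subspace containing $y$ and $z_n=\proj_{C_{\ii(n)}}x_n$, one has $\scal{y-z_n}{x_n-z_n}=0$; since $y_n$ lies in the direction subspace of $C_{\ii(n)}$, also $\scal{y_n}{x_n-z_n}=0$, and $\scal{d_n-z_n}{y-z_n}=\scal{y_n}{y-z_n}$ because $d_n-z_n-y_n\perp C_{\ii(n)}-C_{\ii(n)}$. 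As each $T_{i,n}\in\mathfrak{T}$ with $\Fix T_{i,n}=C_i\ni y$, firm quasinonexpansiveness gives $\|a_{i,n}-y\|^2\leq\|z_n-y\|^2-\theta_{i,n}$; averaging with the weights $(\omega_{i,n})_{i\in I_n^+}$ and using the variance identity $\sum_i\omega_{i,n}\|a_{i,n}-y\|^2=\|d_n-y\|^2+\sum_i\omega_{i,n}\|a_{i,n}-d_n\|^2$ yields both $\|d_n-z_n\|^2\leq\theta_n$ and the sharp bound $\scal{y_n}{y-z_n}\geq\theta_n$. Combining these with $t_n=z_n+\lambda_ny_n$ and $\lambda_n\leq\theta_n/\|y_n\|^2$ (or $t_n=z_n$ when $\theta_n=0$), the expansion $\scal{y-t_n}{x_n-t_n}=\lambda_n\big(\lambda_n\|y_n\|^2-\scal{y_n}{y-z_n}\big)$ shows $\scal{y-t_n}{x_n-t_n}\leq 0$.

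Next, put $H_n=\menge{u\in\HH}{\scal{u-t_n}{x_n-t_n}\leq 0}$, with $H_n=\HH$ when $t_n=x_n$, and $T_n=\proj_{H_n}$. By Example~\ref{ex:01}, $T_n\in\mathfrak{T}$; the first paragraph gives $C\subset\Fix T_n=H_n$; and since $\scal{x_n-t_n}{x_n-t_n}\geq 0$ one checks $T_nx_n=t_n$. Hence $(x_n)_{n\in\NN}$ is of the form treated in Lemma~\ref{l:ws} for this choice of $(T_n)_{n\in\NN}$, so it is well defined, $\sum_{n\in\NN}\|x_{n+1}-x_n\|^2<\pinf$, $\sum_{n\in\NN}\|t_n-x_n\|^2<\pinf$, and by \ref{l:wsiv} it suffices to prove that every weak sequential cluster point of $(x_n)_{n\in\NN}$ lies in $C=\bigcap_{i\in I}C_i$.

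Now I extract asymptotic regularity of the activated operators. Because $y_n$ lies in the direction subspace of $C_{\ii(n)}$ while $z_n-x_n$ is orthogonal to it, $\|t_n-x_n\|^2=\|z_n-x_n\|^2+\|t_n-z_n\|^2$, whence $z_n-x_n\to 0$ and $t_n-z_n\to 0$. Fix $\overline{x}\in C$: from $\|d_n-z_n\|^2\leq\theta_n$ the left endpoint of the $\lambda_n$-interval forces $\lambda_n\geq\varepsilon$, while $\scal{y_n}{\overline{x}-z_n}\geq\theta_n$ together with $\|\overline{x}-z_n\|\leq\|\overline{x}-x_0\|$ (the standard Fej\'er estimate for Haugazeau iterates) gives $\|y_n\|\geq\theta_n/\|\overline{x}-x_0\|$; hence $\theta_n\leq\varepsilon^{-1}\|\overline{x}-x_0\|\,\|t_n-z_n\|\to 0$. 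Since $\theta_n\geq\omega_{j_n,n}\theta_{j_n,n}\geq\varepsilon\max_{i\in I_n}\theta_{i,n}$, it follows that $\sup_{i\in I_n}\|a_{i,n}-z_n\|^2\to 0$.

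Finally, let $x_{k_n}\weakly x$ and fix $i\in I$. By \ref{t:1i} choose $l_n\in\{k_n,\dots,k_n+M_i-1\}$ with $i\in\{\ii(l_n)\}\cup I_{l_n}$; since $\|x_{m+1}-x_m\|\to 0$ and the gaps are at most $M_i-1$, $x_{l_n}-x_{k_n}\to 0$, so $x_{l_n}\weakly x$ and, using $z_n-x_n\to 0$, $z_{l_n}=\proj_{C_{\ii(l_n)}}x_{l_n}\weakly x$. Along a subsequence, either $i=\ii(l_n)$ for every $n$, in which case $z_{l_n}\in C_i$ and weak closedness of $C_i$ gives $x\in C_i$; or $i\in I_{l_n}$ for every $n$, in which case $a_{i,l_n}-z_{l_n}\to 0$ by the asymptotic regularity just obtained, and \ref{t:1ii} gives $x\in C_i$ (for $i\in I\smallsetminus I'$; the case $i\in I'$ is similar). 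As $i$ was arbitrary, $x\in\bigcap_{i\in I}C_i=C$, and Lemma~\ref{l:ws}\ref{l:wsiv} yields $x_n\to\proj_Cx_0$. The main obstacle is the first paragraph: isolating the \emph{sharp} inequality $\scal{y_n}{y-z_n}\geq\theta_n$, which is exactly what makes relaxations up to $\lambda_n=\theta_n/\|y_n\|^2$ admissible and encodes the extrapolation, all while threading the affine preprocessing $z_n=\proj_{C_{\ii(n)}}x_n$ through every step by orthogonality --- the very same orthogonal splitting of $\|t_n-x_n\|^2$ being what later converts square-summability into $\theta_n\to 0$.
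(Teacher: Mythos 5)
Your proof is correct in substance, but it follows a genuinely different route from the paper's. The paper constructs the iteration map explicitly as a composition of known firmly quasinonexpansive building blocks: the extrapolated parallel operator $S_n$ of \cite[Proposition~2.4]{Else01} and the affine--convex composition $T_n$ of \cite[Theorem~2.8]{Numa06}, then verifies through the identities \eqref{e:49}--\eqref{e:53} that the algorithmic quantities $\lambda_n$, $y_n$, $t_n$ realize $t_n=T_nx_n$, feeds this into Lemma~\ref{l:ws}, and finally \emph{defers} the weak-cluster-point analysis entirely to the proof of \cite[Theorem~3.3(vi)]{Numa06}. You instead bypass all of that machinery: you prove the half-space inequality $\scal{y-t_n}{x_n-t_n}\leq 0$ from scratch (the orthogonality relations coming from the affine preprocessing plus the averaged estimate $\scal{d_n-z_n}{y-z_n}\geq\theta_n$ from firm quasinonexpansiveness), realize the update as $x_{n+1}=Q(x_0,x_n,\proj_{H_n}x_n)$ for an explicit half-space $H_n\supset C$, and then carry out the asymptotic-regularity and cluster-point arguments explicitly. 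What the paper's approach buys is brevity and reusability of the cited operator calculus; what yours buys is a self-contained argument that exposes exactly where the extrapolation range $\lambda_n\le\theta_n/\|y_n\|^2$ comes from. Two small caveats. First, the Fej\'er-type bound should read $\|\overline{x}-z_n\|\le\|\overline{x}-x_n\|\le\|\overline{x}-x_0\|+\|x_0-x_n\|\le 2\|\overline{x}-x_0\|$ (Haugazeau iterates satisfy $\|x_n-x_0\|\le\|\proj_Cx_0-x_0\|$, not Fej\'er monotonicity about $\overline{x}$); only boundedness is needed, so the conclusion $\theta_n\to 0$ stands. Second, your closing ``the case $i\in I'$ is similar'' hides a real point: if $i\in I'$ is activated through $I_{l_n}$ with an arbitrary $T_{i,l_n}\in\mathfrak{T}$, hypothesis \ref{t:1ii} does not apply and demiclosedness is not automatic for varying firmly quasinonexpansive operators; one needs either $i=\ii(l_n)$ along a subsequence or $T_{i,l_n}=\proj_{C_i}$ (in which case $d_{C_i}(z_{l_n})\to 0$ and weak lower semicontinuity of $d_{C_i}$ finishes). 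This imprecision is inherited from the theorem's own hypotheses and the paper's deferral to \cite{Numa06}, so it is not a defect you introduced, but it deserves a sentence rather than ``similar.''
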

\begin{proof}
Let us fix $n\in\NN$ temporarily. Define
\begin{equation}
\label{e:43}
L_n\colon\HH\to\RR\colon z\mapsto
\begin{cases}
\displaystyle{\frac{\sum_{i\in I_n^+}\omega_{i,n}\|T_{i,n}z-z\|^2}
{\big\|\sum_{i\in I_n^+}\omega_{i,n}T_{i,n}z-z\big\|^2}},
&\text{if}\:\;z\notin\bigcap_{i\in I_n^+}C_i;\\
1,&\text{if}\:\;z\in\bigcap_{i\in I_n^+}C_i
\end{cases}
\end{equation}
and
\begin{equation}
\label{e:44}
S_n\colon\HH\to\HH\colon z\mapsto
z+L_n(z)\Bigg(\sum_{i\in I_n^+}\omega_{i,n}T_{i,n}z-z\Bigg).
\end{equation}
We derive from \cite[Proposition~2.4]{Else01} that
$S_n\in\mathfrak{T}$ and $\Fix S_n=\bigcap_{i\in I_n^+}
\Fix T_{i,n}=\bigcap_{i\in I_n^+}C_i$. 
We also observe that
\begin{equation}
\label{e:ga1}
\theta_n=0\;\Leftrightarrow\;S_nz_n=z_n\;\Leftrightarrow\;
z_n\in\bigcap_{i\in I_n^+}C_i=\Fix S_n.
\end{equation}
Now define
\begin{equation}
\label{e:45}
K_n\colon\HH\to\RR\colon x\mapsto
\begin{cases}
\dfrac{\big\|S_n\big(\proj_{C_{\ii(n)}}x\big)-
\proj_{C_{\ii(n)}}x\big\|^2}{\big\|\proj_{C_{\ii(n)}}
\big(S_n\big(\proj_{C_{\ii(n)}}x\big)\big)-
\proj_{C_{\ii(n)}}x\big\|^2},
&\text{if}\:\;\proj_{C_{\ii(n)}}x\notin\bigcap_{i\in I_n^+}C_i;\\
1,&\text{if}\:\;\proj_{C_{\ii(n)}}x\in\bigcap_{i\in I_n^+}C_i
\end{cases}
\end{equation}
and
\begin{multline}
\label{e:t}
T_n\colon\HH\to\HH\colon x\mapsto
\proj_{C_{\ii(n)}}x+\gamma_n(x)\Big(\proj_{C_{\ii(n)}}\Big(
S_n\big(\proj_{C_{\ii(n)}}x\big)\Big)-\proj_{C_{\ii(n)}}x\Big),\\
\text{where}\quad\gamma_n(x)
\in\left[\varepsilon,K_n(x)\right].
\end{multline}
Then it follows from \cite[Theorem~2.8]{Numa06} that
$T_n\in\mathfrak{T}$ and 
\begin{equation}
\label{e:59}
\emp\neq C\subset C_{\ii(n)}\cap\bigcap_{i\in I_n^+}C_i=
C_{\ii(n)}\cap\Fix S_n=\Fix T_n. 
\end{equation}
If $\theta_n\neq 0$, 
using \eqref{e:algo1}, \eqref{e:44}, and the fact that
$\proj_{C_{\ii(n)}}$ is an affine operator 
\cite[Corollary~3.22(ii)]{Livre1}, we obtain
\begin{align}
\label{e:49}
\proj_{C_{\ii(n)}}\Big(S_n\big(\proj_{C_{\ii(n)}}x_n\big)\Big)-
\proj_{C_{\ii(n)}}x_n
&=\proj_{C_{\ii(n)}}(S_nz_n)-z_n\nonumber\\
&=\proj_{C_{\ii(n)}}\Big(\big(1-L_n(z_n)\big)z_n+L_n(z_n)d_n\Big)
-z_n\nonumber\\
&=\big(1-L_n(z_n)\big)\proj_{C_{\ii(n)}}z_n+
L_n(z_n)\proj_{C_{\ii(n)}}d_n-z_n\nonumber\\
&=L_n(z_n)\big(\proj_{C_{\ii(n)}}d_n-z_n\big)\nonumber\\
&=L_n(z_n)y_n
\end{align}
and, therefore,
\begin{equation}
\label{e:51}
\big\|\proj_{C_{\ii(n)}}(S_nz_n)-z_n\big\|=L_n(z_n)\|y_n\|.
\end{equation}
Hence, \eqref{e:45} and \eqref{e:44} yield
\begin{equation}
\label{e:56}
K_n(x_n)=
\begin{cases}
\dfrac{\|S_n(z_n)-z_n\|^2}
{\big\|\proj_{C_{\ii(n)}}\big(S_nz_n\big)-z_n\big\|^2}
=\dfrac{\|L_n(z_n)(d_n-z_n)\|^2}{\|L_n(z_n)y_n\|^2}
=\dfrac{\|d_n-z_n\|^2}{\|y_n\|^2},&\text{if}\;\:\theta_n\neq 0;\\ 
1,&\text{if}\;\:\theta_n=0.
\end{cases}
\end{equation}
At the same time, we derive from \eqref{e:43}, \eqref{e:algo1}, 
and \eqref{e:ga1} that
\begin{equation}
\label{e:46}
L_n(z_n)=
\begin{cases}
\dfrac{\theta_n}{\|d_n-z_n\|^2},&\text{if}\;\:\theta_n\neq 0;\\ 
1,&\text{if}\;\:\theta_n=0.
\end{cases}
\end{equation}
Altogether, it results from \eqref{e:t}, \eqref{e:56}, and
\eqref{e:46} that, if $\theta_n\neq 0$, 
\begin{equation}
\label{e:53}
\gamma_n(x_n)L_n(z_n)\in
\big[\varepsilon L_n(z_n),K_n(x_n)L_n(z_n)\big]=
\big[\varepsilon\theta_n/\|d_n-z_n\|^2,\theta_n/\|y_n\|^2\big]
\end{equation}
and, in view of \eqref{e:algo1}, we can therefore set
$\lambda_n=\gamma_n(x_n)L_n(z_n)$. Thus, it follows from
\eqref{e:algo1} and \eqref{e:49} that 
\begin{align}
\label{e:52}
\theta_n\neq 0\;\Rightarrow\;t_n
&=z_n+\lambda_ny_n\nonumber\\
&=z_n+\gamma_n(x_n)L_n(z_n)y_n\nonumber\\
&=\proj_{C_{\ii(n)}}x_n+\gamma_n(x_n)\Big(\proj_{C_{\ii(n)}}
\big(S_n\big(\proj_{C_{\ii(n)}}x_n\big)\big)-
\proj_{C_{\ii(n)}}x_n\Big)\nonumber\\
&=T_nx_n.
\end{align}
On the other hand, \eqref{e:algo1} and \eqref{e:ga1} yield
\begin{equation}
\label{e:ga2}
\theta_n=0\;\Rightarrow\;t_n=z_n=S_nz_n=T_nx_n.
\end{equation}
Combining \eqref{e:52} and \eqref{e:ga2}, we obtain 
\begin{equation}
\label{e:yh4}
x_{n+1}=Q(x_0,x_n,T_nx_n). 
\end{equation}
Turning back to \eqref{e:t} and \eqref{e:algo1}, 
we deduce from \cite[Corollary~3.22(i)]{Livre1} that
\begin{align}
\label{e:94}
\|T_nx_n-x_n\|^2
&=\big\|z_n-x_n+\gamma_n(x_n)\big(
\proj_{C_{\ii(n)}}(S_nz_n)-z_n\big)\big\|^2\nonumber\\
&=\|z_n-x_n\|^2
+2\gamma_n(x_n)\scal{\proj_{C_{\ii(n)}}x_n-x_n}
{\proj_{C_{\ii(n)}}(S_nz_n)-\proj_{C_{\ii(n)}}x_n}\nonumber\\
&\quad\;+|\gamma_n(x_n)|^2
\big\|\proj_{C_{\ii(n)}}(S_nz_n)-z_n\big\|^2
\nonumber\\
&=\|z_n-x_n\|^2+|\gamma_n(x_n)|^2
\big\|\proj_{C_{\ii(n)}}(S_nz_n)-z_n\big\|^2
\nonumber\\
&\geq\|z_n-x_n\|^2+\varepsilon^2
\big\|\proj_{C_{\ii(n)}}(S_nz_n)-z_n\big\|^2.
\end{align}
Since \eqref{e:59} implies that 
\begin{equation}
\label{e:54}
\emp\neq C\subset\bigcap_{n\in\NN}\Fix T_n,
\end{equation}
we derive from \eqref{e:yh4} and Lemma~\ref{l:ws}\ref{l:wsi} 
that $(x_n)_{n\in\NN}$ is well defined. Furthermore, \eqref{e:94}
and Lemma~\ref{l:ws}\ref{l:wsiii} guarantee that 
\begin{equation}
\label{e:xp}
\sum_{n\in\NN}\|z_n-x_n\|^2<+\infty
\end{equation}
and 
\begin{equation}
\label{e:prp}
\sum_{n\in\NN}\|\proj_{C_{\ii(n)}}(S_nz_n)-z_n\|^2<+\infty.
\end{equation}
Finally, in view of \eqref{e:54} and
Lemma~\ref{l:ws}\ref{l:wsiv}, to conclude the
proof, it is enough to show that all the weak sequential cluster
points of $(x_n)_{n\in\NN}$ lie in $C$. Since we have at our
disposal \ref{t:1i}, \ref{t:1ii}, \eqref{e:xp}, and \eqref{e:prp},
showing this inclusion can be done by following the same steps as
in the proof of \cite[Theorem~3.3(vi)]{Numa06}. 
\end{proof}

\begin{remark}
\label{r:12}
Condition \ref{t:1i} in Theorem~\ref{t:1} states that, for each
$i\in I$, the set $C_i$ should be involved at least once 
every $M_i$ iterations. Condition \ref{t:1ii} in 
Theorem~\ref{t:1} is discussed in \cite[Section~3.4]{Numa06}, 
where concrete scenarios that satisfy it are described. 
\end{remark}

\section{Fixed point model and algorithm for Problem~\ref{prob:1}}
\label{sec:4}
To solve Problem~\ref{prob:1}, we are going to reformulate it as an
instance of Problem~\ref{prob:6}. To this end, let us set
\begin{equation}
\label{e:24}
(\forall k\in K)\quad C_k=\menge{x\in\HH}{F_kx=p_k}
\quad\text{and}\quad T_k=p_k+\Id-F_k.
\end{equation}
Then it follows from \eqref{e:f} that
\begin{equation}
\label{e:25}
(\forall k\in K)\quad T_k\;\text{is firmly nonexpansive and}\;
\Fix T_k=C_k.
\end{equation}
We therefore deduce from Lemma~\ref{l:12} that $(C_k)_{k\in K}$ are
closed convex subsets of $\HH$. Thus, upon setting $I=J\cup K$, we
recast Problem~\ref{prob:1} is an instantiation of
Problem~\ref{prob:6}. This leads us to the following solution
method based on Algorithm~\ref{algo:1}.

\begin{proposition}
\label{p:5}
In the setting of Problem~\ref{prob:1}, let $Q$ be as in
Proposition~\ref{p:yves}, fix $\varepsilon\in\zeroun$, and 
denote by $(C_i)_{i\in I'}$ a subfamily of 
$(C_i)_{i\in J}$ of closed affine subspaces the projectors onto
which are easy to implement; this subfamily is assumed to be
nonempty as $\HH$ can be included in it. Iterate
\begin{equation}
\label{e:algo2}
\hskip -0.6mm
\begin{array}{l}
\text{for}\;\:n=0,1,\ldots\\
\left\lfloor
\begin{array}{l}
\text{take}\;\:\ii(n)\in I'\\
z_n=\proj_{C_{\ii(n)}}x_n\\
\text{take a nonempty finite set}\;\:I_n\subset J\cup K\\
\text{for every}\;\:i\in I_n\\
\left\lfloor
\begin{array}{l}
\text{if}\;\:i\in J\\
\left\lfloor
\begin{array}{l}
\text{take}\;\:T_{i,n}\in\mathfrak{T}\;\:\text{such that}\;\:
\Fix T_{i,n}=C_i\\
a_{i,n}=T_{i,n}z_n\\
\end{array}
\right.\\
\text{if}\;\:i\in K\\
\left\lfloor
\begin{array}{l}
a_{i,n}=p_i+z_n-F_iz_n\\
\end{array}
\right.\\
\theta_{i,n}=\|a_{i,n}-z_n\|^2\\
\end{array}
\right.\\
\text{take}\;\:j_n\in I_n\;\:\text{such that}\;\:
\theta_{j_n,n}=\text{\rm max}_{i\in I_n}\theta_{i,n}\\
\text{take}\;\:\{\omega_{i,n}\}_{i\in I_n}\subset[0,1]\;\:
\text{such that}\;\:\sum_{i\in I_n}\omega_{i,n}=1
\;\:\text{and}\;\:\omega_{j_n,n}\geq\varepsilon\\
I_n^+=\menge{i\in I_n}{\omega_{i,n}>0}\\
\theta_n=\sum_{i\in I_n^+}\omega_{i,n}\theta_{i,n}\\
\text{if}\;\theta_n=0\\
\left\lfloor
\begin{array}{l}
t_n=z_n\\
\end{array}
\right.\\
\text{else}\\
\left\lfloor
\begin{array}{l}
d_n=\sum_{i\in I_n^+}\omega_{i,n}a_{i,n}\\
y_n=\proj_{C_{\ii(n)}}d_n-z_n\\
\text{take}\;\:\lambda_n\in\big[\varepsilon\theta_n/\|d_n-z_n\|^2,
\theta_n/\|y_n\|^{2}\big]\\
t_n=z_n+\lambda_ny_n\\
\end{array}
\right.\\[0.7mm]
x_{n+1}=Q(x_0,x_n,t_n).
\end{array}
\right.\\
\end{array}
\end{equation}
Suppose that condition \ref{t:1i} in Theorem~\ref{t:1} holds with
$I=J\cup K$, as well as the following:
\begin{enumerate}[label={\rm[\alph*]}]
\setcounter{enumi}{2}
\item
\label{p:5ii}
For every $i\in J\smallsetminus I'$, every $x\in\HH$, and every
strictly increasing sequence $(r_n)_{n\in\NN}$ in $\NN$,
\eqref{e:fc} holds.
\end{enumerate}
Then $(x_n)_{n\in\NN}$ converges strongly to the solution to
Problem~\ref{prob:1}.
\end{proposition}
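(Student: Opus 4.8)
The plan is to show that \eqref{e:algo2} is nothing but Algorithm~\ref{algo:1} applied to a suitable instance of Problem~\ref{prob:6}, and then to invoke Theorem~\ref{t:1}. Following \eqref{e:24}, I would set $I=J\cup K$ and, for every $k\in K$, $C_k=\menge{x\in\HH}{F_kx=p_k}$ and $T_k=p_k+\Id-F_k$. By \eqref{e:25}, $T_k$ is firmly nonexpansive with $\Fix T_k=C_k$, and Lemma~\ref{l:12} then guarantees that $C_k$ is closed and convex. Because Problem~\ref{prob:1} postulates some $\overline{x}\in\bigcap_{j\in J}C_j$ with $F_k\overline{x}=p_k$ for every $k\in K$, we have $\overline{x}\in C:=\bigcap_{i\in I}C_i$, so that $C\neq\emp$; hence Problem~\ref{prob:1} coincides with the instance of Problem~\ref{prob:6} associated with the family $(C_i)_{i\in I}$, and its unique solution is $\proj_Cx_0$.

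Next I would match \eqref{e:algo2} to \eqref{e:algo1}. The only discrepancy between the two is cosmetic: in \eqref{e:algo2}, the assignment $a_{i,n}=p_i+z_n-F_iz_n$ carried out when $i\in K$ is precisely $a_{i,n}=T_{i,n}z_n$ for the (iteration-independent) choice $T_{i,n}=T_i$, which by \eqref{e:25} and Example~\ref{ex:01} belongs to $\mathfrak{T}$ and satisfies $\Fix T_{i,n}=C_i$; every other line of \eqref{e:algo2} is copied verbatim from \eqref{e:algo1}, now with $I=J\cup K$ and $I'\subseteq J$. Consequently, any sequence $(x_n)_{n\in\NN}$ produced by \eqref{e:algo2} is a sequence produced by Algorithm~\ref{algo:1} for this family of sets, and in particular it is well defined.

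It then remains to check the hypotheses of Theorem~\ref{t:1}. Condition \ref{t:1i} is assumed. For condition \ref{t:1ii}, I would fix $i\in I\smallsetminus I'$; since $I'\subseteq J$ and $J\cap K=\emp$, either $i\in J\smallsetminus I'$ or $i\in K$. In the first case, \eqref{e:fc} is exactly hypothesis \ref{p:5ii}. In the second case, $T_{i,r_n}=T_i$ for every $n$, so, writing $u_n=\proj_{C_{\ii(r_n)}}x_{r_n}$, the premises of \eqref{e:fc} amount to $u_n\weakly x$ and $u_n-T_iu_n\to 0$; as $T_i$ is firmly nonexpansive, hence nonexpansive, the demiclosedness principle \cite[Corollary~4.28]{Livre1} yields $x\in\Fix T_i=C_i$. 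Thus condition \ref{t:1ii} holds, and Theorem~\ref{t:1} gives $x_n\to\proj_Cx_0$, i.e., $(x_n)_{n\in\NN}$ converges strongly to the solution to Problem~\ref{prob:1}.

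The proof is essentially a translation exercise, all the analytic work being contained in Theorem~\ref{t:1} (and, upstream, in Lemma~\ref{l:ws} and \cite{Numa06}). The only genuinely new point --- and the step I would be most careful with --- is the verification of \eqref{e:fc} for the indices $i\in K$: there one must combine the fixed-point reformulation $C_k=\Fix T_k$ from \eqref{e:24}--\eqref{e:25} with the demiclosedness of $\Id-T_k$ at the origin, which is exactly where the firm nonexpansiveness (in fact, mere nonexpansiveness) of $T_k$, equivalently of $F_k$, enters in an essential way.
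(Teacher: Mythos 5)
Your proposal is correct and follows essentially the same route as the paper's own proof: recast Problem~\ref{prob:1} as an instance of Problem~\ref{prob:6} via \eqref{e:24}--\eqref{e:25}, identify \eqref{e:algo2} with Algorithm~\ref{algo:1} by taking $T_{k,n}=T_k=p_k+\Id-F_k$ for $k\in K$, and verify condition \ref{t:1ii} for the indices in $K$ through Browder's demiclosedness principle applied to the nonexpansive operator $T_k$. No gaps.
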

\begin{proof}
Let us bring into play \eqref{e:24} and \eqref{e:25}.
As discussed above, Problem~\ref{prob:1} is an instance of 
Problem~\ref{prob:6}, where $I=J\cup K$. Now set
\begin{equation}
\label{e:26}
(\forall k\in K)(\forall n\in\NN)\quad T_{k,n}=T_k=p_k+\Id-F_k. 
\end{equation}
Then \eqref{e:algo1} reduces to \eqref{e:algo2} and, in view of
condition \ref{p:5ii} above, to conclude via Theorem~\ref{t:1}, 
it suffices to check that condition \ref{t:1ii} in 
Theorem~\ref{t:1} holds for every $k\in K$. Towards this goal, 
let us fix $k\in K$ and a strictly increasing sequence
$(r_n)_{n\in\NN}$ in $\NN$ such that 
$k\in\bigcap_{n\in\NN}I_{r_n}$, and let us set 
$(\forall n\in\NN)$ $u_n=\proj_{C_{\ii(r_n)}}x_{r_n}$. 
Suppose that $u_n\weakly x\in\HH$ and that 
$T_{k,r_n}u_n-u_n\to 0$. Then \eqref{e:26} yields
$T_ku_n-u_n\to 0$ and, since $T_k$ is nonexpansive by \eqref{e:25},
it follows from Browder's demiclosedness principle
\cite[Corollary~4.28]{Livre1} that $x\in\Fix T_k=C_k$, which
concludes the proof.
\end{proof}

As was mentioned in Remark~\ref{r:24}\ref{r:24iv} and will be
illustrated in Section~\ref{sec:5}, exploiting the presence of
affine subspaces typically leads to faster convergence.
Problem~\ref{prob:1} can nonetheless be solved without taking the
affine subspaces into account. Formally, this amounts to
considering that $(C_i)_{i\in I'}$ consists solely of $\HH$, in
which case Proposition~\ref{p:5} leads to the following
implementation.

\begin{corollary}
\label{c:5}
In the setting of Problem~\ref{prob:1}, let $Q$ be as in
Proposition~\ref{p:yves}, and fix $\varepsilon\in\zeroun$. 
Iterate
\begin{equation}
\label{e:algo3}
\hskip -0.6mm
\begin{array}{l}
\text{for}\;\:n=0,1,\ldots\\
\left\lfloor
\begin{array}{l}
\text{take a nonempty finite set}\;\:I_n\subset J\cup K\\
\text{for every}\;\:i\in I_n\\
\left\lfloor
\begin{array}{l}
\text{if}\;\:i\in J\\
\left\lfloor
\begin{array}{l}
\text{take}\;\:T_{i,n}\in\mathfrak{T}\;\:\text{such that}\;\:
\Fix T_{i,n}=C_i\\
a_{i,n}=T_{i,n}x_n\\
\end{array}
\right.\\
\text{if}\;\:i\in K\\
\left\lfloor
\begin{array}{l}
a_{i,n}=p_i+x_n-F_ix_n\\
\end{array}
\right.\\
\theta_{i,n}=\|a_{i,n}-x_n\|^2\\
\end{array}
\right.\\
\text{take}\;\:j_n\in I_n\;\:\text{such that}\;\:
\theta_{j_n,n}=\text{\rm max}_{i\in I_n}\theta_{i,n}\\
\text{take}\;\:\{\omega_{i,n}\}_{i\in I_n}\subset[0,1]\;\:
\text{such that}\;\:\sum_{i\in I_n}\omega_{i,n}=1
\;\:\text{and}\;\:\omega_{j_n,n}\geq\varepsilon\\
I_n^+=\menge{i\in I_n}{\omega_{i,n}>0}\\
\theta_n=\sum_{i\in I_n^+}\omega_{i,n}\theta_{i,n}\\
\text{if}\;\theta_n=0\\
\left\lfloor
\begin{array}{l}
t_n=x_n\\
\end{array}
\right.\\
\text{else}\\
\left\lfloor
\begin{array}{l}
y_n=\sum_{i\in I_n^+}\omega_{i,n}a_{i,n}-x_n\\[1mm]
\text{take}\;\:\lambda_n\in\big[\varepsilon\theta_n/\|y_n\|^2,
\theta_n/\|y_n\|^{2}\big]\\
t_n=x_n+\lambda_ny_n\\
\end{array}
\right.\\[0.7mm]
x_{n+1}=Q(x_0,x_n,t_n).
\end{array}
\right.\\
\end{array}
\end{equation}
Suppose that the following hold:
\begin{enumerate}[label={\rm[\alph*]}]
\setcounter{enumi}{3}
\item
\label{c:5i}
There exist strictly positive integers $(M_i)_{i\in J\cup K}$ 
such that $(\forall i\in J\cup K)(\forall n\in\NN)$
$i\in\bigcup_{l=n}^{n+M_i-1} I_l$.
\item
\label{c:5ii}
For every $i\in J$, every $x\in\HH$, and every
strictly increasing sequence $(r_n)_{n\in\NN}$ in $\NN$,
\begin{equation}
\label{e:c5}
\bigg[\,i\in\bigcap_{n\in\NN}I_{r_n},\;
x_{r_n}\weakly x,\,\;\text{and}\;\;
T_{i,r_n}x_{r_n}-x_{r_n}\to 0\,\bigg]
\quad\Rightarrow\quad x\in C_i.
\end{equation}
\end{enumerate}
Then $(x_n)_{n\in\NN}$ converges strongly to the solution to
Problem~\ref{prob:1}.
\end{corollary}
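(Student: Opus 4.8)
The plan is to obtain Corollary~\ref{c:5} as the particular case of Proposition~\ref{p:5} in which the subfamily of affine subspaces $(C_i)_{i\in I'}$ is reduced to the single set $\HH$. Write $\star$ for its index, so that $C_\star=\HH$. Then, in \eqref{e:algo2}, necessarily $\ii(n)=\star$ for every $n\in\NN$, whence $z_n=\proj_{C_{\ii(n)}}x_n=\proj_{\HH}x_n=x_n$ and $\proj_{C_{\ii(n)}}=\Id$. First I would substitute these identities into \eqref{e:algo2}: for $i\in J$ the update $a_{i,n}=T_{i,n}z_n$ becomes $a_{i,n}=T_{i,n}x_n$; the vector $y_n=\proj_{C_{\ii(n)}}d_n-z_n$ becomes $y_n=d_n-x_n=\sum_{i\in I_n^+}\omega_{i,n}a_{i,n}-x_n$; and, since $d_n-z_n=y_n$, the admissible interval for $\lambda_n$ collapses to $\big[\varepsilon\theta_n/\|y_n\|^2,\theta_n/\|y_n\|^2\big]$. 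After these reductions the recursion \eqref{e:algo2} coincides term-by-term with \eqref{e:algo3}, so the two algorithms generate the same sequences.

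Next I would verify that the hypotheses of Proposition~\ref{p:5} specialize to \ref{c:5i} and \ref{c:5ii}. For condition \ref{t:1i} of Theorem~\ref{t:1} (with $I=J\cup K$): the index $\star$ is covered at every iteration since $\ii(l)\equiv\star$, and as $C_\star=\HH$ it leaves the intersection $C$ unchanged and imposes no binding requirement; thus, for every $i\in J\cup K$, the requirement $i\in\bigcup_{l=n}^{n+M_i-1}\{\ii(l)\}\cup I_l$ reduces to $i\in\bigcup_{l=n}^{n+M_i-1}I_l$, which is exactly \ref{c:5i}. For condition \ref{p:5ii}: given $i\in J$ and a strictly increasing sequence $(r_n)_{n\in\NN}$, we have $\proj_{C_{\ii(r_n)}}x_{r_n}=x_{r_n}$, so the premises of \eqref{e:fc} become $i\in\bigcap_{n\in\NN}I_{r_n}$, $x_{r_n}\weakly x$, and $T_{i,r_n}x_{r_n}-x_{r_n}\to 0$, while the conclusion $x\in C_i$ is untouched; this is precisely \eqref{e:c5}. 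Invoking Proposition~\ref{p:5} then delivers the strong convergence of $(x_n)_{n\in\NN}$ to the solution to Problem~\ref{prob:1}.

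Since every step is a routine specialization, I do not anticipate a genuine obstacle. The only point requiring a little care is the bookkeeping around the trivial index $\star$ of $\HH$: one must confirm that enlarging the constraint family by $\HH$ neither alters the target point $\proj_C x_0$ nor strengthens condition \ref{t:1i}, so that \ref{c:5i} and \ref{c:5ii} are indeed sufficient to apply Proposition~\ref{p:5}.
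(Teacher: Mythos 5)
Your proposal is correct and is exactly the route the paper intends: the paper presents Corollary~\ref{c:5} as the specialization of Proposition~\ref{p:5} obtained by letting $(C_i)_{i\in I'}$ consist solely of $\HH$, so that $z_n=x_n$, $\proj_{C_{\ii(n)}}=\Id$, the $\lambda_n$-interval collapses, condition \ref{t:1i} reduces to \ref{c:5i}, and \eqref{e:fc} reduces to \eqref{e:c5}. Your careful bookkeeping around the trivial index of $\HH$ fills in precisely the details the paper leaves implicit.
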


\section{Numerical illustration}
\label{sec:5}
Let $\HH$ be the standard Euclidean space $\RR^N$, where $N=1024$.
The goal is to recover the original form of the signal
$\overline{x}\in\HH$ shown in Figure~\ref{fig:1} from the
following:
\begin{enumerate}
\item 
\label{ex:j1}
$\overline{x}$ resides in the subspace $C_1$ of signals which are
band-limited in the sense that their discrete Fourier transform 
vanishes outside of the $103$ lowest frequency components.
\item
\label{ex:j2}
Let $\tv\colon\HH\to\RR\colon x=(\xi_i)_{1\leq i\leq N}
\mapsto\sum_{1\leq i\leq N-1}|\xi_{i+1}-\xi_i|$ be the total
variation function. An upper bound $\gamma\in\RPP$ on
$\tv(\overline{x})$ is available. 
The associated constraint set is
$C_2=\menge{x\in\HH}{\tv(x)-\gamma\leq 0}$.
For this experiment, $\gamma=1.5\tv(\overline{x})$. 
\item 
\label{ex:j3}
$25$ observations $(q_k)_{k\in K}$ are available where, for every
$k\in K=\{3,\ldots,27\}$, $q_k$ is the isotonic regression of the
coefficients of
$\overline{x}$ in a dictionary $(e_{k,j})_{1\leq j\leq 10}$ of
vectors in $\HH$. More precisely (see
Example~\ref{ex:3}\ref{ex:3i}), set $\GG=\RR^{10}$ and 
$D=\menge{(\xi_j)_{1\leq j\leq 10}\in\GG}
{\xi_1\leq\cdots\leq\xi_{10}}$. Then, for every $k\in K$,
$q_k=\proj_D(L_k\overline{x})$, where
$L_k\colon\HH\to\GG\colon x\mapsto
(\scal{x}{e_{k,j}})_{1\leq j\leq 10}$.
\end{enumerate}
We seek the minimal-energy signal consistent with the information
above, i.e., we seek to 
\begin{equation}
\label{e:j}
\text{minimize}\:\;\|x\|\quad\text{subject to}\quad
x\in C_1\cap C_2\quad\text{and}\quad(\forall k\in K)\quad
\proj_D(L_kx)=q_k.
\end{equation}
Let us set $x_0=0$, $J=\{1,2\}$, and, for every $k\in K$,
$p_k=\|L_k\|^{-2}L_k^*q_k$, and
$F_k=\|L_k\|^{-2}L_k^*\circ\proj_D\circ L_k$. For every $k\in K$,
applying Proposition~\ref{p:1} with $\II=\{k\}$, $\GG_k=\GG$,
$\beta_k=1$, and $Q_k=\proj_D$ shows that $p_k$ is the proximal
point of $\overline{x}$ relative to $F_k$ and, for every $x\in\HH$,
$F_kx=p_k$ $\Leftrightarrow$ $\proj_D(L_kx)=q_k$. 
We therefore arrive at an instance of Problem~\ref{prob:1} which is
equivalent to \eqref{e:j}, namely
\begin{equation}
\label{e:j2}
\text{minimize}\:\;\|x\|\quad\text{subject to}\quad
x\in C_1\cap C_2\quad\text{and}\quad(\forall k\in K)\quad
F_kx=p_k.
\end{equation}
With an eye towards algorithm \eqref{e:algo2}, since $C_1$ is an
affine subspace with a straightforward projector \cite{Youl78}, 
set $I'=\{1\}$. At iteration $n\in\NN$, the
constraint \ref{ex:j2} is activated by the subgradient projector
$T_{2,n}=\sproj_{C_2}$ of \eqref{e:sproj} (see
\cite{Imag04} for its computation) since the direct projector is
hard to implement. The fact that 
condition \ref{p:5ii} in Proposition~\ref{p:5} is satisfied follows
from \cite[Proposition~29.41(vi)(a)]{Livre1}.
We solve \eqref{e:j2} with algorithm \eqref{e:algo2} to obtain the
solution $x_\infty$ shown in Figure~\ref{fig:2} (see
\cite[Algorithm~8.1.1]{Hard90} for the computation of 
$\proj_{D}$). 

To demonstrate the benefits of exploiting the presence of affine
subspaces in algorithm \eqref{e:algo2}, we show in
Figure~\ref{fig:3} the approximate solution it generates after
$1000$ iterations. For the sake of comparison, we display in 
Figure~\ref{fig:4} the approximate
solution generated by algorithm \eqref{e:algo3} after $1000$
iterations. The following parameters are used: 
\begin{itemize}
\item
{\bfseries Algorithm~\eqref{e:algo2}:}
For every $n\in\NN$, $\ii(n)=1$, 
and whenever $\theta_n\neq 0$, 
\begin{equation}
\label{e:39}
\lambda_n=
\begin{cases}
\dfrac{\theta_n}{2\|y_n\|^2},&\text{if}\;\:n\equiv 0\mod
3;\\[1.0em]
\dfrac{\theta_n}{\|y_n\|^2},&\text{if}\;\:n\not\equiv 0\mod 3.
\end{cases}
\end{equation}
Additionally, $I_n$ is selected to activate
$C_2$ at every iteration and periodically sweep through
one entry of $K$ per iteration, hence satisfying
condition~\ref{t:1i} in Theorem~\ref{t:1} with $M_1=M_2=1$,
and, for every $k\in K$, $M_k=25$. Moreover, 
for every $i\in I_n$, $\omega_{i,n}=1/2$.
\item
{\bfseries Algorithm~\eqref{e:algo3}:} Iteration $n\in\NN$ is
executed with the same relaxation scheme \eqref{e:39} as in
algorithm \eqref{e:algo2}, and the same choice of the activation
set $I_n$, with the exception that $I_n$ also activates $C_1$ at
every iteration. In addition, for every $i\in I_n$,
$\omega_{i,n}=1/3$. 
\end{itemize}

While both approaches are equivalent means of solving \eqref{e:j2},
Figures~\ref{fig:3} and \ref{fig:4} demonstrate qualitatively that
algorithm \eqref{e:algo2} yields faster convergence to the
solution $x_\infty$ than algorithm \eqref{e:algo3}. This is
confirmed quantitatively by the error plots of Figure~\ref{fig:5}.

\begin{figure}[H]
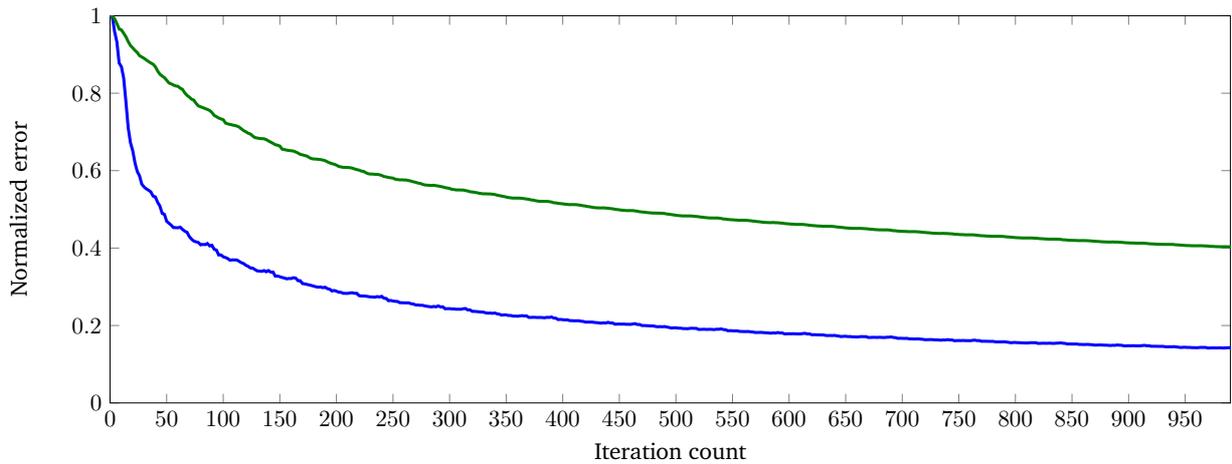

~\hspace{.05cm}

\caption{Normalized error $\|x_n-x_{\infty}\|/\|x_0-x_{\infty}\|$
versus iteration count $n\in\{0,\dots,1000\}$ for algorithm
\eqref{e:algo2} (blue) and algorithm \eqref{e:algo3} (green).}
\label{fig:5}
\end{figure}

\end{document}